\newtheorem{thm}{Theorem}[section]
\newtheorem{prop}[thm]{Proposition}
\newtheorem{lemma}[thm]{Lemma}
\newtheorem{cor}[thm]{Corollary}
\newtheorem{obser}[thm]{Observation}
\newtheorem*{INTRA}{Theorem Z}
\newtheorem*{INTR1}{Theorem 1}
\newtheorem*{INTR2}{Theorem 2}
\theoremstyle{definition}
\newtheorem{defn}[thm]{Definition}
\newtheorem*{ack}{Acknowledgment}
\newtheorem*{PRELnotation}{Notation}
\newtheorem*{PRELconvention}{Convention}
\theoremstyle{remark}
\newtheorem{remark}[thm]{Remark}
\def \dist {\operatorname{dist}}
\def\M{\mathcal{M}}
\def\U{\mathcal{U}}
\def\V{\mathcal{V}}
\def\I{\mathcal{I}}
\def\en{\mathbb N}
\def\er{\mathbb R}
\def\err{{\bf P}}
\def\ell{{\bf L}}
\def\uu{{\bf U}}
\def\eps{\varepsilon}  
\begin{document}
\author{Martin Rmoutil}
\title{Products of non-$\sigma$-lower porous sets}
\thanks{The work was supported by the Grant SVV-2011-263316.}
\email{mar@rmail.cz}
\address{Charles University, Faculty of Mathematics and Physics, Department of Mathematical\linebreak Analysis, Sokolovsk\'a 83, 186 75 Prague 8, Czech Republic}
\subjclass[2010]{28A05, 54B10, 54E35, 54G20}
\keywords{Topologically complete metric space, abstract porosity, $\sigma$-porous set, $\sigma$-lower porous set, Cartesian product}
\begin{abstract}
In the present article we provide an example of two closed non-$\sigma$-lower porous sets $A, B \subseteq \er$ such that the product $A\times B$ is lower porous.
On the other hand, we prove the following: Let $X$ and $Y$ be topologically complete metric spaces, let $A\subseteq X$ be a non-$\sigma$-lower porous Suslin set and let $B\subseteq Y$ be a non-$\sigma$-porous Suslin set. Then the product $A\times B$ is non-$\sigma$-lower porous. We also provide a brief summary of some basic properties of lower porosity, including a simple characterization of Suslin non-$\sigma$-lower porous sets in topologically complete metric spaces. 
\end{abstract}
\maketitle
\section{Introduction}
In the present article we deal with Cartesian products of $\sigma$-lower porous sets. The work is motivated by a paper of L.~Zaj\'i\v{c}ek \cite{1} where the following theorem is proved:
\begin{INTRA}[{[\ref{1}, Theorem 1]}]
Let $(X,\rho)$ and $(Y,\sigma)$ be topologically complete metric spaces and let $A\subseteq X$ and $B\subseteq Y$ be non-$\sigma$-porous $G_\delta$-sets. Then the Cartesian product $A\times B$ is non-$\sigma$-porous in the space $(X\times Y,\rho_m)$ where $\rho_m$ is the maximum metric.
\end{INTRA}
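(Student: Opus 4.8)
My plan is to prove Theorem Z by the method of \emph{Foran systems}, reducing it to a structural input from the theory of $\sigma$-porous sets plus a product lemma in which the role of the maximum metric becomes transparent. Recall that a Foran system on a metric space $(Z,d)$ is a nonempty family $\mathcal{F}$ of nonempty subsets of $Z$ encoding ``robust non-porosity'': for every $F\in\mathcal{F}$, every $x\in F$ and every $\varepsilon>0$ there is $\delta\in(0,\varepsilon)$ such that for each $y\in B_d(x,\delta)$ and each $\eta\in(0,\delta)$ there is $F'\in\mathcal{F}$ with $F'\subseteq F\cap B_d(y,\eta)$. The two facts I would invoke are: \textbf{(A)} if $Z$ is topologically complete and $E\subseteq Z$ is a non-$\sigma$-porous Suslin (in particular $G_\delta$) set, then $E$ contains a member of some Foran system on $Z$ all of whose members are $G_\delta$ in $Z$ (equivalently, completely metrizable in the subspace topology); and \textbf{(B)} if $\mathcal{F}$ is a Foran system on $Z$ whose members are $G_\delta$ in $Z$ and $E\supseteq F$ for some $F\in\mathcal{F}$, then $E$ is non-$\sigma$-porous. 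Fact (A) is the deep input and is taken from the existing theory of $\sigma$-porous sets (a transfinite ``$\sigma$-porosity derivative'' applied to the Suslin set $E$, whose non-empty kernel is everywhere non-$\sigma$-porous and carries a Foran system); it is the counterpart of the characterization of Suslin non-$\sigma$-lower porous sets announced in the abstract. Fact (B) is a fusion argument, sketched below.

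Granting (A) and (B), the deduction of Theorem Z is short. First I would apply (A) to $A\subseteq X$ to obtain a Foran system $\mathcal{F}_A$ on $(X,\rho)$ with $G_\delta$ members and some $F_0\in\mathcal{F}_A$ with $F_0\subseteq A$, and likewise $\mathcal{F}_B$ on $(Y,\sigma)$ with $G_0\in\mathcal{F}_B$, $G_0\subseteq B$. Then I would verify that
\[
  \mathcal{F}:=\{\,F\times G:\ F\in\mathcal{F}_A,\ G\in\mathcal{F}_B\,\}
\]
is a Foran system on $(X\times Y,\rho_m)$. This is exactly where the maximum metric is essential: $B_{\rho_m}\bigl((x,y),r\bigr)=B_\rho(x,r)\times B_\sigma(y,r)$, so Foran-witnesses in the two coordinates multiply. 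Concretely, given $F\times G\in\mathcal{F}$, $(x,y)\in F\times G$ and $\varepsilon>0$, choose $\delta_1$ for $(F,x,\varepsilon)$ and $\delta_2$ for $(G,y,\varepsilon)$ and set $\delta=\min(\delta_1,\delta_2)$; for $(x',y')\in B_\rho(x,\delta)\times B_\sigma(y,\delta)$ and $\eta\in(0,\delta)$, pick $F'\subseteq F\cap B_\rho(x',\eta)$ and $G'\subseteq G\cap B_\sigma(y',\eta)$ from the two systems, and then $F'\times G'\in\mathcal{F}$ satisfies $F'\times G'\subseteq(F\times G)\cap B_{\rho_m}\bigl((x',y'),\eta\bigr)$. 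Since products of $G_\delta$ sets are $G_\delta$, $\mathcal{F}$ is a Foran system with $G_\delta$ members and $F_0\times G_0\in\mathcal{F}$ lies in $A\times B$; so by (B), $A\times B$ is non-$\sigma$-porous.

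The remaining step is to recall why (B) holds, and this is the place where one must be careful that $\rho,\sigma,\rho_m$ need not be complete, only topologically complete. Suppose $E\supseteq F_{\ast}$ with $F_{\ast}=\bigcap_n U_n$, $U_n$ open, and $E=\bigcup_n P_n$; passing to closures and splitting by porosity constant, I may assume each $P_n$ is closed and porous at each of its points with a fixed constant $c_n\in(0,1)$. Fix an auxiliary \emph{complete} metric $d^{\ast}$ on $Z$ inducing the topology. One builds recursively members $F_{\ast}=F^{(0)}\supseteq F^{(1)}\supseteq\cdots$ of $\mathcal{F}$, points $z_n\in F^{(n)}$ and radii $r_n\downarrow 0$ with $\overline{B_d(z_{n+1},r_{n+1})}\subseteq B_d(z_n,r_n)$, $\overline{B_d(z_n,r_n)}\subseteq U_n$, and $\operatorname{diam}_{d^{\ast}}\!\bigl(B_d(z_n,r_n)\bigr)<2^{-n}$ (possible since $d$ and $d^{\ast}$ give the same neighbourhood filter at $z_n$). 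At step $n$: if $z_n\notin P_n$ one simply shrinks into a $d$-ball about $z_n$ missing $P_n$; if $z_n\in P_n$ one uses porosity of $P_n$ at $z_n$ to find, at a suitable small scale $r<\delta$ (with $\delta$ the Foran parameter for $(F^{(n)},z_n,r_n)$), a ball $B_d(w,c_n r)\subseteq B_d(z_n,r_n)$ disjoint from $P_n$, and then the Foran property applied at the point $w$ and scale $\approx c_n r$ produces $F^{(n+1)}\subseteq F^{(n)}\cap B_d(w,c_n r)$, disjoint from $P_n$. The sequence $(z_n)$ is $d^{\ast}$-Cauchy, hence converges to some $z^{\ast}\in\bigcap_n U_n=F_{\ast}\subseteq E$ with $z^{\ast}\notin P_n$ for every $n$ — contradicting $E=\bigcup_n P_n$.

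I expect the genuine obstacle to be fact (A): that a non-$\sigma$-porous Suslin set actually \emph{carries} a Foran system. Its proof needs the Suslin hypothesis and a transfinite derivative argument, and it is precisely the structural theorem that (in the lower-porous guise) this paper develops; here I would simply cite it from the $\sigma$-porosity literature. Everything after (A) is comparatively routine, the two points requiring care being (i) the non-completeness of the prescribed metrics, handled as above by keeping all porosity estimates with $\rho,\sigma,\rho_m$ while controlling convergence through an auxiliary complete metric and the $G_\delta$ representation of the ambient Foran member, and (ii) the fact that porous sets have holes only at \emph{some} small scales, so the recursion cannot be greedy and must use exactly the flexibility built into a Foran system — the freedom to descend to a sub-member inside an arbitrarily small ball near the current point.
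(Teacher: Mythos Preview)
Your approach is correct and is precisely the method the paper attributes to the original reference. Note, however, that this paper does not itself prove Theorem~Z; it is quoted from Zaj\'{\i}\v{c}ek's earlier article~[\ref{1}] as motivation, and the remark following Proposition~\ref{charLP} explicitly confirms that the proof there proceeds via ``the method of the Foran Lemma and its partial converse''---exactly your facts~(B) and~(A). Your product lemma (Foran systems multiply under the maximum metric because $\rho_m$-balls are rectangles) and your fusion sketch for~(B) are the standard ingredients of that argument.

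One small anachronism: for the $G_\delta$ hypothesis of Theorem~Z as originally stated, fact~(A) in~[\ref{1}] does not require the Suslin inscribing machinery you mention; the kernel/derivative argument works directly on a $G_\delta$ set. The extension to Suslin sets came later via the inscribing theorems recorded here as Theorems~\ref{inscrupper} and~\ref{inscrlower}, which is why the present paper remarks that Theorem~Z can now be generalized to Suslin sets.
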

It is a natural question to ask, whether an analogous statment holds for lower porosity (i.e. the notion of porosity defined by limes inferior rather than limes superior). In the third section of this article we show that the answer is generally negative by giving a counterexample.

\begin{INTR1}
There exist closed non-$\sigma$-lower porous sets $A\subseteq \er$ and $B\subseteq \er$ such that the Cartesian product $A\times B$ is lower porous in $\er^2$.
\end{INTR1}

However, if we strengthen the assumptions of the original conjecture, we obtain the following theorem. These two theorems together give us a fairly complete answer to our question.

\begin{INTR2}
Let $(X,\rho)$ and $(Y,\sigma)$ be topologically complete metric spaces. Assume that $A\subseteq X$ and $B\subseteq Y$ are Suslin sets in their respective spaces. If $A$ is non-$\sigma$-lower porous in $X$ and $B$ is non-$\sigma$-porous in $Y$ then the Cartesian product $A\times B$ is non-$\sigma$-lower porous in $X\times Y$ (with the maximum metric).
\end{INTR2}
It is easy to see that both aforementioned notions of $\sigma$-porosity are invariant with respect to bilipschitz homeomorphisms. Therefore, in all the previous theorems we can equip the product spaces with any metric which is ``bilipschitz equivalent" to the maximum metric and the resulting statement will be true.

It is also fitting to give an explanation as to why in Theorem 2 we only require the sets $A$ and $B$ to be Suslin while in Theorem Z these are assumed to be of the type $G_\delta$. The reason is that we use two inscribing theorems (see \ref{inscrupper} and \ref{inscrlower}) which, at the time Theorem Z was proved, had not yet been discovered. Of course, this means Theorem Z can be generalized to Suslin sets.
\section{Some facts about $\sigma$-lower porosity and abstract porosity}
The main aim of this section is to provide the reader with a self-contained collection of some basic facts about $\sigma$-lower porous sets (with some references to related articles). It might be of some independent interest, but we shall use these facts to prove our main results.
\begin{PRELnotation}
In the whole paper we shall denote by $B(x,r)$ the open ball with centre $x$ and radius $r$, by $\overline{A}$ the closure of the set $A$, and by $\partial A$ the boundary of $A$. As usual, for a set $X$ the symbol $2^X$ denotes the power set of $X$. \end{PRELnotation}
\begin{PRELconvention}
Unless stated otherwise, we shall consider all product spaces equipped with the maximum metric (i.e. for $x_1,x_2\in (X,\rho)$ and $y_1,y_2\in (Y,\sigma)$, $\rho_m([x_1;y_1],[x_2;y_2]) =$ \linebreak $\max\left\{ \rho(x_1,x_2), \sigma(y_1,y_2) \right\}$).
\end{PRELconvention}
The following standard definitions of $\sigma$-porosity originate in a work of A. Denjoy from 1920; however, a systematic investigation of these sets (and the usage of the current nomenclature) has begun in 1967 with an article of E.~P.~Dolzhenko.  For extensive information about $\sigma$-porous sets from various viewpoints we refer the reader to L.~Zaj\'i\v{c}ek's survey articles 
\linebreak \cite{7} and \cite{2}. 

\begin{defn}\label{def1}
Let $(X,\rho)$ be a metric space, $M\subseteq X$, $x\in X$ and $R>0$. We define 
\[ \gamma (x,R,M)=\sup\left\{ r>0: \mbox{for some } z\in X, \; B(z,r)\subseteq B(x,R)\setminus M \right\}, \]
\[ \overline{p}(M,x)=\limsup_{R\to 0_+}\frac{2\cdot\gamma(x,R,M)}{R}, \qquad \underline{p}(M,x)=\liminf_{R\to 0_+}\frac{2\cdot\gamma(x,R,M)}{R}. \]
A set $M\subseteq X$ is \emph{(upper) porous at x} if $\overline{p}(M,x)>0$ and \emph{lower porous at x} if $\underline{p}(M,x)>0$.

Now assume $\err$ is a relation between points and subsets of $X$ (i.e. $\err\subseteq X\times 2^X$). The symbol $\err(x,A)$ where $x\in X$ and $A\subseteq X$ means that $[x;A]\in \err$. We say that $\err$ is an \emph{abstract porosity on $X$} if the following conditions are satisfied (for all $A\subseteq X$, $B\subseteq X$ and $x\in X$):

\begin{itemize}
\item[\bf(A1)] If $A\subseteq B\subseteq X$, $x\in X$ and $\err(x,B)$, then $\err(x,A)$.
\item[\bf(A2)] $\err(x,A)$ if and only if there is an $r>0$ such that $\err(x,A\cap B(x,r))$.
\item[\bf(A3)] $\err(x,A)$ if and only if $\err(x,\overline{A})$.
\end{itemize}

Note that the relations which correspond (in the sense of the first point of the following list) to the notions of porosity and lower porosity are clearly abstract porosities. Let $\err$ be an abstract porosity on $X$. We say that $A\subseteq X$ is

\begin{itemize}
\item \emph{$\err$-porous at $x\in X$} if $\err(x,A)$,
\item \emph{$\err$-porous (in X)} if $A$ is $\err$-porous at each of its points,
\item \emph{$\sigma$-$\err$-porous (in X)} if $A$ is a countable union of $\err$-porous sets,
\item \emph{$\sigma$-$\err$-porous at $x\in X$} if there is an $r>0$ such that $A\cap B(x,r)$ is $\sigma$-$\err$-porous.
\end{itemize}

In case $\err$ corresponds to lower porosity we say \emph{$A$ is lower porous}, \emph{$\sigma$-lower porous} or \emph{$\sigma$-lower porous at $x$}. If $\err$ corresponds to ordinary (upper) porosity, we simply omit the symbol $\err$ and write \emph{$A$ is porous} etc. (however, in some cases we tend to add ``upper'' to avoid confusion).
\end{defn}

\begin{remark}
If $(X,\rho)$ is a metric space and $\err$ is an abstract porosity on $X$, it is well known that the system $\I$ of all $\sigma$-$\err$-porous sets in $X$ satisfies the following conditions:
\begin{enumerate}[(i)]
\item If $A\subseteq B$ and $B\in\I$ then $A\in \I$.
\item If $A_n\in \I$ for all $n\in\en$ then $\bigcup_{n=1}^\infty A_n \in\I$.
\end{enumerate}

\end{remark}

Our first step will be to develop a method to recognize non-$\sigma$-lower porous sets (Proposition \ref{propBaire}). The following two propositions are well known (see the survey article \cite{2}), but we shall provide the proofs for the sake of completeness.
\begin{prop}\label{lemmaBaire}
Let $(X,\rho)$ be a metric space and let $A\subseteq X$ be $\sigma$-lower porous. Then $A$ can be covered by a countable system of closed lower porous sets.
\end{prop}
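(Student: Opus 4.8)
The plan is to start from a decomposition $A=\bigcup_{k=1}^\infty C_k$ into lower porous (not necessarily closed) sets and to refine each $C_k$ by quantifying its lower porosity. For $k,n\in\en$ put
\[ C_{k,n}=\Bigl\{\,x\in C_k\ :\ \gamma(x,R,C_k)\ge\tfrac{R}{n}\ \text{ for all } R\in\bigl(0,\tfrac1n\bigr]\,\Bigr\}. \]
First I would verify that $C_k=\bigcup_{n=1}^\infty C_{k,n}$: for $x\in C_k$ we have $\underline p(C_k,x)>0$, so choosing $0<\lambda<\underline p(C_k,x)$ there is a $\delta>0$ with $2\gamma(x,R,C_k)/R>\lambda$ for every $R\in(0,\delta)$, and then any $n\in\en$ with $\tfrac1n\le\min\{\delta,\tfrac\lambda2\}$ satisfies $x\in C_{k,n}$. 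Hence $A\subseteq\bigcup_{k,n\in\en}\ov{C_{k,n}}$, and it suffices to show that each $\ov{C_{k,n}}$ is lower porous.

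The heart of the argument is to check that $C_{k,n}$ is lower porous not merely at its own points but at every point of its closure. Fix $x\in\ov{C_{k,n}}$ and $R\in\bigl(0,\tfrac1{2n}\bigr)$, and pick $y\in C_{k,n}$ with $\rho(x,y)<\tfrac R4$. The radius $R'=\tfrac R2$ lies in $\bigl(0,\tfrac1n\bigr]$, so the defining property of $C_{k,n}$ gives $\gamma(y,\tfrac R2,C_k)\ge\tfrac R{2n}$; thus for each $r<\tfrac R{2n}$ there is a $z\in X$ with $B(z,r)\subseteq B(y,\tfrac R2)\setminus C_k$. Since $\rho(x,y)<\tfrac R4$ we get $B(y,\tfrac R2)\subseteq B(x,\tfrac{3R}{4})\subseteq B(x,R)$, and as $C_{k,n}\subseteq C_k$ the ball $B(z,r)$ also avoids $C_{k,n}$; hence $B(z,r)\subseteq B(x,R)\setminus C_{k,n}$. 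Since $r<\tfrac R{2n}$ was arbitrary, $\gamma(x,R,C_{k,n})\ge\tfrac R{2n}$, so $2\gamma(x,R,C_{k,n})/R\ge\tfrac1n$ for all $R\in\bigl(0,\tfrac1{2n}\bigr)$ and therefore $\underline p(C_{k,n},x)\ge\tfrac1n>0$.

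Finally I would pass to the closure: since a nonempty open ball meets a set precisely when it meets the closure of that set, $\gamma(x,R,C_{k,n})=\gamma(x,R,\ov{C_{k,n}})$ for all $x\in X$ and $R>0$, so $C_{k,n}$ and $\ov{C_{k,n}}$ have the same lower porosity at every point (this is exactly axiom (A3) for lower porosity). Thus $\ov{C_{k,n}}$ is lower porous at each of its points, i.e. a closed lower porous set, and $\{\ov{C_{k,n}}\}_{k,n\in\en}$ is the desired countable cover of $A$. The step I expect to require the most care is the second paragraph: the point is that the uniform lower bound built into the definition of $C_{k,n}$ is precisely what lets one transport the lower-porosity estimate from a point of $C_{k,n}$ to a nearby point of $\ov{C_{k,n}}$, and one has to juggle the radii ($R'=\tfrac R2$, $\rho(x,y)<\tfrac R4$) so that the ball obtained around $y$ still lies inside $B(x,R)$. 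The rest is routine.
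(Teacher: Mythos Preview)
Your argument is correct and is essentially the same as the paper's: both quantify the lower porosity of each piece by a parameter $n$ (you via the explicit sets $C_{k,n}$, the paper via a choice function $h_0$ and sets $A_n=\{x: h_0(x)>1/n,\ \underline p(x,A)>1/n\}$), and then transport the uniform estimate from a nearby point of $C_{k,n}$ to an arbitrary point of $\ov{C_{k,n}}$, invoking $\gamma(\cdot,\cdot,M)=\gamma(\cdot,\cdot,\ov M)$ to pass to the closure. The only cosmetic differences are the choice of radii (you use $R'=\tfrac R2$ and $\rho(x,y)<\tfrac R4$, the paper uses $h/2$ for both) and the resulting constants; also, the paper first reduces to a single lower porous piece while you keep the index $k$, which is immaterial.
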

\begin{proof}
Without loss of generality we can assume the set $A$ is lower porous. From the definition of lower porosity is clear that for any $x\in A$ we can choose a positive number $h_0=h_0(x)$ such that for all $h\in (0,h_0(x))$:
\[ \frac{2\cdot\gamma(x,h,A)}{h}>\frac{\underline{p}(x,A)}{2}.\]
Thus we have chosen a function $h_0:A\longrightarrow (0,\infty)$. Set
\[ A_n:= \Big\{ x\in A: h_0(x)>\frac{1}{n} \;\mbox{ and }\; \underline{p}(x,A)>\frac{1}{n} \Big\};\]
then, clearly, $A=\bigcup_{n=1}^\infty A_n$. We shall now prove that for each $n\in\en$ the set $\overline{A_n}$ is lower porous. And since it is obvious that for any $x\in X$, $R>0$ and $M\subseteq X$ the equality $\gamma(x,R,M)=\gamma(x,R,\overline{M})$ is true, we only need to show that the set $A_n$ is lower porous at each point $x\in \overline{A_n}\setminus A_n$.

To that end, choose a natural number $n$ and a point $x\in \overline{A_n}\setminus A_n$. Now, for an arbitrary $h\in \left(0,\frac{1}{n}\right)$ there is a point $y\in B\left(x,\frac{h}{2}\right)\cap A_n$ and from the definition of $A_n $ it follows that there is a point $z\in B\left(y,\frac{h}{2}\right)$ such that $B\left(z,\frac{h}{8n}\right)\subseteq B\left(y,\frac{h}{2}\right)\setminus A_n$. Thus $\gamma(x,h,A_n)\geq \frac{h}{8n}$ and
\[ \liminf_{h\rightarrow 0_+}\frac{2\cdot\gamma(x,h,A_n)}{h}\geq \frac{1}{4n}>0.\]
\end{proof}
\begin{prop}\label{propBaire}
Let $(X,\rho)$ be a metric space and let $F\subseteq X$ be a topologically complete subspace. Let there exist a set $D\subseteq F$ dense in $F$ such that $F$ is lower porous (in $X$) at no point $x\in D$. Then $F$ is not $\sigma$-lower porous in $X$.
\end{prop}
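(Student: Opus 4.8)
The plan is to argue by contradiction via the Baire category theorem, using Proposition \ref{lemmaBaire} to replace ``$\sigma$-lower porous'' by a countable union of closed lower porous sets. So suppose $F$ were $\sigma$-lower porous in $X$. By Proposition \ref{lemmaBaire} there are sets $F_n\subseteq X$, $n\in\en$, each closed in $X$ and lower porous in $X$, with $F\subseteq\bigcup_{n=1}^\infty F_n$. Then each $F\cap F_n$ is closed in the subspace $F$, and $F=\bigcup_{n=1}^\infty(F\cap F_n)$. Since $F$ is topologically complete, it is a Baire space, so at least one $F\cap F_{n_0}$ fails to be nowhere dense in $F$; that is, there are $x_0\in F$ and $r_0>0$ with $B(x_0,r_0)\cap F\subseteq F_{n_0}$.

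Next I would bring in the density of $D$: choose $x\in D\cap B(x_0,r_0)$, and since $B(x_0,r_0)$ is open in $X$, pick $r>0$ with $B(x,r)\subseteq B(x_0,r_0)$, so that $B(x,r)\cap F\subseteq F_{n_0}$; in particular $x\in F_{n_0}$ because $x\in D\subseteq F$. The point of this is that the lower porosity of $F_{n_0}$ at $x$ should force lower porosity of $F$ at $x$. Concretely, for every $R\in(0,r)$ I claim $\gamma(x,R,F)\geq\gamma(x,R,F_{n_0})$: if a ball $B(z,\varrho)$ satisfies $B(z,\varrho)\subseteq B(x,R)\setminus F_{n_0}$, then $B(z,\varrho)\subseteq B(x,r)$, whence $B(z,\varrho)\cap F\subseteq B(x,r)\cap F\subseteq F_{n_0}$; together with $B(z,\varrho)\cap F_{n_0}=\emptyset$ this yields $B(z,\varrho)\cap F=\emptyset$, i.e.\ $B(z,\varrho)\subseteq B(x,R)\setminus F$. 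Taking the supremum over admissible $\varrho$ gives the claimed inequality, and letting $R\to0_+$,
\[ \underline{p}(F,x)=\liminf_{R\to0_+}\frac{2\cdot\gamma(x,R,F)}{R}\ \geq\ \liminf_{R\to0_+}\frac{2\cdot\gamma(x,R,F_{n_0})}{R}=\underline{p}(F_{n_0},x). \]
Because $F_{n_0}$ is lower porous and $x\in F_{n_0}$, the right-hand side is positive, so $\underline{p}(F,x)>0$, i.e.\ $F$ is lower porous at $x\in D$ — contradicting the hypothesis, and completing the proof.

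I do not expect a serious obstacle here: the only mildly delicate point is the ``transfer of holes'' step, which works precisely because we restrict attention to radii $R<r$, where $F$ and $F_{n_0}$ agree inside $B(x,r)$; and one must remember that topological completeness of $F$ is exactly what supplies the Baire-category dichotomy. Everything else is bookkeeping with the definitions of $\gamma$ and $\underline{p}$.
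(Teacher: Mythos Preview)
Your proof is correct and follows essentially the same route as the paper's: contradiction, Proposition~\ref{lemmaBaire} to get closed lower porous pieces, Baire category in the topologically complete subspace $F$ to find a relatively open piece contained in some $F_{n_0}$, then density of $D$ to locate a bad point. The only difference is cosmetic: where the paper simply asserts that $F$ is lower porous at each point of $G\cap F$ ``for $G$ is an open set,'' you spell out the hole-transfer inequality $\gamma(x,R,F)\geq\gamma(x,R,F_{n_0})$ for $R<r$ explicitly.
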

\begin{proof}
Assume to the contrary that $F$ is $\sigma$-lower porous. Proposition \ref{lemmaBaire} gives us closed lower porous sets $F_n$ ($n\in \en$) such that $F\subseteq \bigcup_{n=1}^\infty F_n$. Hence $F=\bigcup_{n=1}^\infty (F_n\cap F)$ and the set $F_n\cap F$ is closed in $F$ for each natural $n$. Using the Baire theorem in the topologically complete space $F$ we obtain an open set $G\subseteq X$ such that $\emptyset\neq G\cap F \subseteq F_{n_0}\cap F$ for some natural number $n_0$. Thus $G\cap F$ (being a subset of $F_{n_0}$) is lower porous in $X$ and it follows that $F$ is lower porous at every point $x\in G\cap F$ (for $G$ is an open set). But the set $A$ is dense in $F$ so there exists a point $x\in A\cap G\cap F$ which is a contradiction with the assumption that $F$ is lower porous at no point of $A$.
\end{proof}

Now we formulate two rather deep inscribing theorems which will be used on various occasions throughout the paper. Their purpose is to obtain some of our statements about non-$\sigma$-porous and non-$\sigma$-lower porous sets for all Suslin sets instead of closed (or G$_\delta$) sets only.

\begin{thm}[{[\ref{6}, Theorem 3.1]}]\label{inscrupper}
Let $(X,\rho)$ be a topologically complete metric space and let $S\subseteq X$ be a non-$\sigma$-porous Suslin set. Then there exists a closed non-$\sigma$-porous set $F\subseteq S$.
\end{thm}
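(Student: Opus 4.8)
The plan is to represent $S$ by a Suslin scheme, apply a Cantor--Bendixson-type derivative to that scheme so as to pass to an ``everywhere non-$\sigma$-porous'' subtree, and then carve out a compact subset by a fusion. First I would reduce. A Suslin set is a continuous image of a closed subset of the Baire space $\en^{\en}$, hence separable, so we may assume $X=\overline{S}$ is a Polish space; moreover we may assume $X$ has no isolated point, since otherwise an isolated point of $S$ is a closed singleton which is non-$\sigma$-porous, and we are done. (In a perfect metric space singletons are even lower porous, so countable sets are $\sigma$-porous; this will be used again below.) Fix a continuous surjection $f\colon D\to S$ with $D\subseteq\en^{\en}$ closed, let $T_D=\{s\in\en^{<\en}:N_s\cap D\neq\emptyset\}$ be the tree of $D$, where $N_s=\{\beta\in\en^{\en}:s\subseteq\beta\}$ (so that the body $[T_D]$ equals $D$), and for a subtree $T\subseteq T_D$ and $s\in T$ let $[T]_s$ denote the set of branches of $T$ through $s$. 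By continuity of $f$, $\operatorname{diam}\overline{f([T_D]_s)}\to0$ along every branch. The goal is to produce a pruned, finitely branching subtree $T^\ast\subseteq T_D$ such that $f([T^\ast])$ is not $\sigma$-porous: then $[T^\ast]$ is compact, so $F:=f([T^\ast])$ is a compact, hence closed, subset of $S$ which is not $\sigma$-porous.

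Next comes the derivative. Put $T^{(0)}=T_D$, let $T^{(\alpha+1)}=\{s\in T^{(\alpha)}:f([T^{(\alpha)}]_s)\text{ is not }\sigma\text{-porous}\}$, and $T^{(\lambda)}=\bigcap_{\alpha<\lambda}T^{(\alpha)}$ at limit stages; each $T^{(\alpha)}$ is a subtree, since a superset of a non-$\sigma$-porous set is non-$\sigma$-porous. As a decreasing transfinite sequence of subsets of the countable set $\en^{<\en}$, it stabilizes at some $T^\infty$. Each node of $\en^{<\en}$ leaves the derivation at most once, and when $s$ leaves it at stage $\alpha$ the piece $f([T^{(\alpha)}]_s)$ is $\sigma$-porous by construction; a short bookkeeping argument then shows $S\setminus f([T^\infty])$ is covered by these countably many pieces, hence is $\sigma$-porous. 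Since $S$ is not $\sigma$-porous, neither is $f([T^\infty])$; in particular $T^\infty\neq\emptyset$, and, by stabilization, $f([T^\infty]_s)$ is not $\sigma$-porous for \emph{every} $s\in T^\infty$.

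Then the fusion, whose engine is a splitting lemma. Given $s\in T^\infty$, the set $f([T^\infty]_s)$ is not $\sigma$-porous and hence uncountable, so it contains two distinct points $f(\beta_1)\neq f(\beta_2)$ with $\beta_i\in[T^\infty]_s$; taking sufficiently long initial segments $t_i\subseteq\beta_i$ yields incomparable nodes $t_1,t_2\in T^\infty$ for which $\overline{f([T^\infty]_{t_1})}$ and $\overline{f([T^\infty]_{t_2})}$ are disjoint and of arbitrarily small diameter, while each $f([T^\infty]_{t_i})$ is still non-$\sigma$-porous. Iterating this level by level produces a sequence of finite, pairwise incomparable ``fronts'' with pairwise disjoint pieces of diameters tending to $0$; the subtree $T^\ast\subseteq T^\infty$ generated by them is pruned and finitely (in fact $2$-)branching, $[T^\ast]$ is compact, and $F=f([T^\ast])\subseteq f([T^\infty])\subseteq S$.

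The main obstacle is to guarantee that the set $F$ produced by the fusion is \emph{itself} not $\sigma$-porous: the naive fusion above keeps each front-piece $f([T^\infty]_s)$ non-$\sigma$-porous, but $F$ meets that piece only in the much smaller set $f([T^\ast]_s)$ obtained after all the subsequent splitting, and non-$\sigma$-porosity need not survive such a decreasing intersection. One therefore has to thread a non-$\sigma$-porosity invariant through the whole construction, and this is where the real work lies. The cleanest device is a determinacy/unfolding argument: non-$\sigma$-porosity of a Suslin set can be detected by a winning strategy for the ``chaser'' in a suitable Banach--Mazur-type porosity game (one player tries to exhaust the set by countably many closed porous pieces, the other pursues a point avoiding all of them), and one runs the fusion so that the branches of $T^\ast$ encode runs won by the chaser, forcing $f([T^\ast])$ to be non-$\sigma$-porous. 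An alternative is to carry a Cantor--Bendixson-style ordinal rank of closed $\sigma$-porous sets and to arrange that $F$ equals its own ``remove the largest relatively open $\sigma$-porous subset'' derivative; completeness of $X$ enters here through the Baire argument underlying Proposition~\ref{propBaire} and its upper-porosity analogue. In the easy special case where some relatively open part of $S$ is non-porous at a dense set of points one simply places the centres of the scheme for $F$ at such points and invokes the upper-porosity analogue of Proposition~\ref{propBaire}; the substance of the general statement is exactly the ``everywhere porous'' case.
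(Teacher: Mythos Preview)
The paper does not prove this theorem at all: it is quoted verbatim from Zelen\'y--Pelant \cite{6} and used as a black box, so there is no ``paper's own proof'' to compare your proposal against.

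That said, your outline is a fair high-level description of how the result in \cite{6} is actually established, and you have correctly isolated the genuine difficulty: the derivative step and the elementary splitting lemma are routine, but nothing in your fusion guarantees that the \emph{limit} set $F=f([T^\ast])$ is non-$\sigma$-porous, only that the intermediate pieces $f([T^\infty]_s)$ are. You acknowledge this and point to a porosity game as the fix, but you do not define the game, do not prove its determinacy, and do not show that a winning strategy for the chaser can be woven into the fusion so that $F$ itself wins. In \cite{6} this is precisely the substantial part of the argument (several pages of work), and your paragraph does not substitute for it. The alternative you sketch---carrying an ordinal rank and arranging that $F$ equals its own derivative---runs into the same problem, since the upper-porosity analogue of Proposition~\ref{propBaire} fails (as the paper notes in the remark following Proposition~\ref{charLP}): a closed set which is porous at every point can still be non-$\sigma$-porous, so placing centres at non-porosity points is not available in the hard case. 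In short, your proposal is a correct road map but not a proof; the missing idea is exactly the game-theoretic machinery you allude to.
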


\begin{thm}[{[\ref{5}, Corollary 3.4]}]\label{inscrlower}
Let $(X,\rho)$ be a topologically complete metric space and let $S\subseteq X$ be a non-$\sigma$-lower porous Suslin set. Then there exists a closed non-$\sigma$-lower porous set $F\subseteq S$.
\end{thm}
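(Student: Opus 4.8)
Note first that $S$, being a Suslin set, is a continuous image of the Baire space $\en^\en$ and hence separable, so all covering arguments below may use the Lindel\"of property freely. The plan is to dispose of the closed case by a direct ``kernel'' construction, to represent a general Suslin $S$ by a Suslin scheme, and to transfer the closed-case idea to the scheme by a fusion whose output is then fed into Proposition \ref{propBaire}.

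For closed $S$ I would set $S'=\{x\in S: S\text{ is not }\sigma\text{-lower porous at }x\}$. Each $x\in S\setminus S'$ has a ball $B(x,r_x)$ with $S\cap B(x,r_x)\in\I$; by the Lindel\"of property countably many of these cover $S\setminus S'$, so $S\setminus S'\in\I$ and therefore $S'=S\setminus(S\setminus S')$ is non-$\sigma$-lower porous. As $S'$ is closed and, by the same local decomposition, non-$\sigma$-lower porous at each of its points, the choice $F=S'$ settles the closed case. The real difficulty is to produce an honest closed-in-$X$ subset of $S$ when $S$ is only Suslin, since then the pointwise information carried by such a kernel need not reside on any closed-in-$X$ subset of $S$.

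For general Suslin $S$ I would fix a regular Suslin scheme of closed sets $(F_s)_{s\in\en^{<\omega}}$, with $F_s\supseteq F_{s^\frown i}$, with diameters tending to $0$ along each branch, and with $S=\bigcup_{\alpha\in\en^\en}\bigcap_n F_{\alpha|n}$. Using that $\I$ is a $\sigma$-ideal, I would prune from the scheme every node $s$ below which the captured part $S_s$ of $S$ (the union over those branches passing through $s$) is $\sigma$-lower porous; since a countable union of such parts is again $\sigma$-lower porous, the pruned subscheme still captures a non-$\sigma$-lower-porous subset of $S$ and is hereditarily non-$\sigma$-lower porous at every surviving node. On this core I would carry out a Cantor-scheme fusion, constructing closed sets $\{C_t\}_{t\in 2^{<\omega}}$ of shrinking diameter, each threaded through the scheme so that the compact body $F=\bigcap_n\bigcup_{|t|=n}C_t$ satisfies $F\subseteq S$, together with a countable dense set $D\subseteq F$ of ``special'' points. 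At each $x\in D$ I would reserve a sequence of scales $R_k\to 0_+$ and, exploiting hereditary non-$\sigma$-lower porosity to guarantee that enough mass is locally available, insert finitely many further pieces so that the inserted sets become $\eps R_k$-dense in $B(x,R_k)$; this forces $\gamma(x,R_k,F)/R_k\to 0$, i.e. $F$ is not lower porous at any point of $D$. Topological completeness of $X$ makes $F$ a nonempty closed subset of $S$, and Proposition \ref{propBaire} then yields that $F$ is non-$\sigma$-lower porous.

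The principal obstacle is to reconcile, stage by stage in the fusion, the requirement $F\subseteq S$ (which forces ever thinner selections along branches of the scheme) with the requirement that $F$ stay ``fat'', i.e. nearly space-filling at infinitely many scales about each special point, which is exactly what defeats lower porosity. The pruning step is precisely what keeps these compatible: hereditary non-$\sigma$-lower porosity of the core ensures that, however deep the construction has descended, at each special point there always remains a further admissible scale at which the locally available set is too thick to be lower porous, so the required dense insertion can be performed without leaving the core. Axiom \textbf{(A3)} is used to compute porosity on closures, allowing the closed body $F$ to inherit non-$\sigma$-lower porosity from the core, while topological completeness of $X$ underlies both the nonemptiness of the fusion body and the Baire-category argument inside $F$ in Proposition \ref{propBaire}.
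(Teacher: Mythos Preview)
The paper does not prove this statement; it is quoted from [\ref{5}, Corollary~3.4] as one of the ``two rather deep inscribing theorems'' and used throughout as a black box. There is therefore no proof in the paper to compare your proposal against.

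That said, your outline is in the spirit of the actual argument in [\ref{5}]: prune a Suslin scheme for $S$ down to a hereditarily non-$\sigma$-lower-porous core, run a fusion producing a closed $F\subseteq S$ together with a countable dense $D\subseteq F$ at whose points $F$ is not lower porous, and conclude via Proposition~\ref{propBaire}. Two issues are worth flagging. First, a minor inconsistency: you index the scheme by $2^{<\omega}$ but then speak of inserting ``finitely many further pieces'' at each special scale; the tree must in fact have variable finite branching, and this should be built into the notation from the start. Second, and more substantively, the phrase ``exploiting hereditary non-$\sigma$-lower porosity to guarantee that enough mass is locally available'' is where the genuine difficulty hides. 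Non-$\sigma$-lower porosity of a piece $S_s$ does not by itself give you, at a prescribed point $x$ and scale $R_k$, that $\overline{S_s}$ is $\eps R_k$-dense in $B(x,R_k)$; for that you need non-lower-porosity of the relevant piece \emph{at the specific point} $x$. This can be arranged---one chooses the special points from the dense set where the core fails to be lower porous, as in Corollary~\ref{kercor}(iv)---but the admissible scales $R_k$ must then be selected only after the current fusion level has been committed, small enough to sit inside the already-chosen pieces, and the insertions for the countably many special points must be interleaved with the descent down the Suslin scheme so that each special point receives infinitely many scales. This bookkeeping is exactly the content of [\ref{5}], and your outline does not yet supply it.
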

We continue by recalling several basic definitions (cf. e.g. \cite{3} and \cite{4}) which we need in the following.
\begin{defn}
Let $(X,\rho)$ be a metric space and let $\err$ be an abstract porosity on $X$. If $A\subseteq X$ then by $K_\err (A)$ we denote the set of all $x\in A$ such that $A$ is not $\sigma$-$\err$-porous at $x$.

A system $\M\subseteq 2^X$ is called
\begin{itemize}
\item \emph{locally finite} if for each $x\in X$ there is an $r>0$ such that the ball $B(x,r)$ intersects at most finitely many elements of $\M$,
\item \emph{discrete} if for each $x\in X$ there is an $r>0$ such that the ball $B(x,r)$ intersects at most one element of $\M$,
\item \emph{$\sigma$-discrete} if it is a countable union of discrete systems.
\end{itemize}

We say $\M$ is a \emph{cover} of $X$ if $\bigcup \M = X$. Let $\U$ and $\V$ be two covers of $X$. Then $\V$ is a \emph{refinement} of $\U$ if for each $B\in \V$ there is a set $A\in \U$ such that $B\subseteq A$.
\end{defn}
An elementary proof of the following Proposition \ref{propStone} can be found as the proof of Lemma 3 in the article \cite{3}; we give an alternative proof which is more transparent, but is not elementary since it uses the famous theorem of A.~H.~Stone about the paracompactness of metric spaces ([\ref{4}, Theorem 4.4.1]). We will use the following easy lemma.
\begin{lemma} \label{lemmadiscr}
Let $(X,\rho)$ be a metric space and let $\err$ be an abstract porosity on $X$. Then:
\begin{enumerate}[(i)]
\item If $\M$ is a discrete system of $\err$-porous sets, then $\bigcup \M$ is $\err$-porous.
\item If $\M$ is a $\sigma$-discrete system of $\sigma$-$\err$-porous sets, then $\bigcup \M$ is $\sigma$-$\err$-porous.
\end{enumerate}
\end{lemma}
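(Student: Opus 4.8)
The plan is to prove Lemma \ref{lemmadiscr} directly from the axioms (A1)--(A3) of abstract porosity, using only the fact that a discrete system is ``locally trivial'' in the sense that every point has a neighbourhood meeting at most one member. For part (i), let $\M$ be a discrete system of $\err$-porous sets and write $P=\bigcup\M$. Fix $x\in P$; we must show $\err(x,P)$. By discreteness there is an $r>0$ such that $B(x,r)$ meets at most one element of $\M$, say $A\in\M$ (if it meets none, then $P\cap B(x,r)=\emptyset$, and $\err(x,\emptyset)$ holds by (A1) applied to $\emptyset\subseteq A$ for any $\err$-porous $A$ containing $x$, or more simply one checks this degenerate case separately). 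Then $P\cap B(x,r)=A\cap B(x,r)$. Since $x\in P$ and $B(x,r)$ meets only $A$, in fact $x\in A$, so $\err(x,A)$ because $A$ is $\err$-porous. By (A2) there is $s\in(0,r]$ with $\err(x,A\cap B(x,s))$; since $A\cap B(x,s)=P\cap B(x,s)$, we get $\err(x,P\cap B(x,s))$, and (A2) again gives $\err(x,P)$. The key point being exploited is that (A2) lets us pass freely between a set and its intersection with a small ball, which is exactly what localizes the statement.

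For part (ii), suppose $\M=\bigcup_{n\in\en}\M_n$ where each $\M_n$ is discrete, and each member of $\M$ is $\sigma$-$\err$-porous. Since $\bigcup\M=\bigcup_n\bigcup\M_n$ is a countable union, it suffices to show each $\bigcup\M_n$ is $\sigma$-$\err$-porous. So fix $n$ and write each $A\in\M_n$ as $A=\bigcup_{k\in\en}A^k$ with $A^k$ being $\err$-porous. For fixed $k$, the system $\M_n^k:=\{A^k:A\in\M_n\}$ is a system of $\err$-porous sets which is again discrete (a ball meeting at most one $A\in\M_n$ meets at most one $A^k$). Hence by part (i), $\bigcup\M_n^k=\bigcup_{A\in\M_n}A^k$ is $\err$-porous. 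Therefore $\bigcup\M_n=\bigcup_{A\in\M_n}\bigcup_k A^k=\bigcup_k\bigl(\bigcup_{A\in\M_n}A^k\bigr)$ is a countable union of $\err$-porous sets, i.e. $\sigma$-$\err$-porous. Combining over $n$ finishes the proof.

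I do not anticipate a genuine obstacle here; the only mild subtlety is the bookkeeping in part (ii) — one must be careful that re-indexing a discrete system by a fixed ``coordinate'' $k$ preserves discreteness, and that the resulting double countable union can be flattened to a single one, but both are immediate. The other point worth stating cleanly is the degenerate case where $B(x,r)$ meets no element of $\M$, which should be dispatched at the start, perhaps by noting that (A1) together with any axiom forces $\err(x,\emptyset)$ whenever $\err$ is nonempty at $x$ for some set, or simply by observing that if $x\in\bigcup\M$ then $B(x,r)$ does meet some element (namely one containing $x$), so this case cannot occur for the points we care about.
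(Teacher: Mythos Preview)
Your proof is correct and follows essentially the same route as the paper's: for (i) you localize via discreteness to a single member and use (A1)/(A2), and for (ii) you reduce to a single discrete family, decompose each member into countably many $\err$-porous pieces, and apply (i) coordinatewise---exactly as the paper does. The only cosmetic difference is that the paper invokes (A1) directly to pass from $\err(x,A)$ to $\err(x,A\cap B(x,r))$, whereas you go through (A2) first; note that (A2) alone does not guarantee $s\le r$, so strictly speaking you are tacitly using (A1) to shrink $s$ as well.
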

\begin{proof}
First, we shall prove assertion (i). Let $\M$ be a discrete system of $\err$-porous sets and let $x\in\bigcup\M$ be an arbitrary point; we shall prove that $\bigcup \M$ is $\err$-porous at $x$. Since the system $\M$ is discrete, there is an $r>0$ and $M\in\M$ such that
\begin{equation}\label{4equ1}
\left(\bigcup\M \right)\cap B(x,r) = M\cap B(x,r). 
\end{equation}
The set $M$ is $\err$-porous and from {\bf (A1)} (see \ref{def1}) we have that so is $M\cap B(x,r)$. It follows from \eqref{4equ1} and {\bf (A2)} that also the sum $\bigcup \M$ is $\err$-porous at $x$.

To prove the second assertion, assume (clearly without loss of generality) $\M$ is a discrete system of $\sigma$-$\err$-porous sets. Each $M\in\M$ can be written in the form $M=\bigcup_{n=1}^\infty A_n^M$ where the set $A_n^M$ is $\err$-porous for any $n\in \en$. It is obvious that for each $n\in\en$ the system $\left\{ A_n^M: M\in\M \right\}$ is discrete. Thus, using the first part of this lemma, we obtain the $\sigma$-$\err$-porosity of
\[ \bigcup \M=\bigcup_{n=1}^\infty \: \bigcup_{M\in\M}A_n^M. \qedhere \]
\end{proof}

\begin{prop}\label{propStone}
Let $(X,\rho)$ be a metric space and let $\err$ be an abstract porosity on $X$. Assume the set $A\subseteq X$ is $\sigma$-$\err$-porous at each of its points. Then $A$ is $\sigma$-$\err$-porous.
\end{prop}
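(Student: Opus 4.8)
The plan is to manufacture, out of the pointwise hypothesis, a $\sigma$-discrete cover of $A$ by $\sigma$-$\err$-porous sets and then quote Lemma \ref{lemmadiscr}(ii); the device that produces the $\sigma$-discreteness is Stone's theorem. First, for each $x\in A$ fix $r_x>0$ with $A\cap B(x,r_x)$ $\sigma$-$\err$-porous, and set $G:=\bigcup_{x\in A}B(x,r_x)$, an open subset of $X$ containing $A$. Then $\U:=\{B(x,r_x):x\in A\}$ is an open cover of the metric space $G$, so by Stone's theorem on paracompactness of metric spaces --- used in the form ``every open cover of a metric space has a $\sigma$-discrete open refinement'' --- there is an open refinement $\V=\bigcup_{n\in\en}\V_n$ of $\U$ covering $G$, with each $\V_n$ discrete in $G$. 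Every $V\in\V$ lies in some $B(x,r_x)$, hence $A\cap V\subseteq A\cap B(x,r_x)$ is $\sigma$-$\err$-porous.

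Were the system $\{A\cap V:V\in\V\}$ $\sigma$-discrete \emph{in $X$}, Lemma \ref{lemmadiscr}(ii) would immediately give that $A=\bigcup_{V\in\V}(A\cap V)$ is $\sigma$-$\err$-porous. This is the only genuinely delicate point, and I expect it to be the main obstacle: the discreteness of $\V_n$ is discreteness in the subspace $G$, and a boundary point $x\in\partial G\cap\overline{A}$ may have every ball about it meeting infinitely many members of $\V_n$. To cure this I would refine once more by distance to the complement of $G$: put $F_k:=\{x\in X:\dist(x,X\setminus G)\ge 1/k\}$ for $k\in\en$ (so $F_k\subseteq G$ and $\bigcup_{k\in\en}F_k=G$, since $G$ is open). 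Then
\[ A=\bigcup_{n,k\in\en}\ \bigcup_{V\in\V_n}\bigl(A\cap V\cap F_k\bigr), \]
each set $A\cap V\cap F_k$ is again $\sigma$-$\err$-porous, and for each fixed pair $n,k$ the system $\{A\cap V\cap F_k:V\in\V_n\}$ is discrete \emph{in $X$}: at $x\in G$ a sufficiently small ball stays inside $G$ and meets at most one member of $\V_n$, while at $x\in X\setminus G$ the ball $B(x,1/k)$ is disjoint from $F_k$ (any $y\in F_k$ satisfies $\rho(x,y)\ge\dist(y,X\setminus G)\ge 1/k$). Thus $\{A\cap V\cap F_k:n,k\in\en,\ V\in\V_n\}$ is a $\sigma$-discrete system of $\sigma$-$\err$-porous sets with union $A$, and Lemma \ref{lemmadiscr}(ii) finishes the proof.

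Apart from this boundary issue the argument is routine, using only the hereditarity of $\sigma$-$\err$-porosity (the Remark after Definition \ref{def1}), Lemma \ref{lemmadiscr}, and Stone's theorem; the non-elementary ingredient is precisely the passage from $\sigma$-discreteness relative to the open set $G$ to $\sigma$-discreteness relative to $X$, which the $F_k$-refinement handles.
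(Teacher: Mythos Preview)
Your proof is correct and follows the same overall strategy as the paper: produce a $\sigma$-discrete system of $\sigma$-$\err$-porous sets covering $A$ via Stone's theorem, then invoke Lemma~\ref{lemmadiscr}(ii). The one substantive difference is how the two proofs deal with the varying radii $r_x$. You cover only the open set $G=\bigcup_{x\in A}B(x,r_x)$, which forces you to upgrade $\sigma$-discreteness in $G$ to $\sigma$-discreteness in $X$ by the additional $F_k$-stratification; this is handled correctly but costs you an extra layer. The paper instead stratifies $A$ first, setting $A_k:=\{x\in A:B(x,1/k)\cap A\text{ is }\sigma\text{-}\err\text{-porous}\}$, and then for each fixed $k$ covers \emph{all of $X$} by balls of uniform radius $1/(2k)$; Stone's theorem then gives a refinement that is $\sigma$-discrete in $X$ from the outset, and the boundary issue you identify never arises. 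So the paper's route is a bit shorter, but the underlying mechanism is the same.
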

\begin{proof}
Set $A_n:=\left\{ x\in A: B \left(x, \frac{1}{n} \right) \cap A \mbox { is } \sigma\mbox{-}\err\mbox{-porous} \right\}$; by the assumption, $A=\bigcup_{n=1}^\infty A_n$. Let us fix an arbitrary $k\in \en$ and prove that $A_k$ is $\sigma$-$\err$-porous. 

To that end, we define the open cover $\U$ of $X$ as $\U:=\left\{ B\left( x, \frac{1}{2k}\right): x\in X \right \}$; it is easy to see that for each $B\in\U$ the set $B\cap A_k$ is $\sigma$-$\err$-porous. Using the Stone Theorem we obtain a $\sigma$-discrete refinement $\V$ of $\U$. Since $\V$ is a refinement of $\U$, we have that for each $G\in \V$ the set $G\cap A_k$ is $\sigma$-$\err$-porous and it follows from Lemma \ref{lemmadiscr} that $A_k=\bigcup\left\{G\cap A_k : G\in\V \right\}$ is $\sigma$-$\err$-porous. This concludes the proof.
\end{proof}

An immediate consequence of this result is the following.

\begin{cor}\label{kercor}
Let $(X,\rho)$ be a metric space and let $\err$ be an abstract porosity on $X$. Assume the set $A\subseteq X$ is not $\sigma$-$\err$-porous. Then:
\begin{enumerate}[(i)]
\item $K_\err (A)$ is nonempty and closed in $A$.
\item The set $A\setminus K_\err (A)$ is $\sigma$-$\err$-porous.
\item $K_\err( K_\err (A)) = K_\err (A)$ (i.e. $K_\err (A)$ is $\sigma$-$\err$-porous at none of its points).
\item The set of all points at which $K_\err(A)$ is not $\err$-porous is dense in $K_\err (A)$.
\end{enumerate}
\end{cor}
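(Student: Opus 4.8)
The plan is to obtain all four assertions from Proposition \ref{propStone} together with the two ideal properties of $\sigma$-$\err$-porous sets recorded in the Remark following Definition \ref{def1} and the axioms {\bf (A1)}, {\bf (A2)}. Write $K:=K_\err(A)$ throughout. The one recurring observation I would isolate first is that being $\sigma$-$\err$-porous at a point is an ``open'' condition: if $A\cap B(x,r)$ is $\sigma$-$\err$-porous and $y\in B(x,r)$, choose $s>0$ with $B(y,s)\subseteq B(x,r)$; then $A\cap B(y,s)$, as a subset of the $\sigma$-$\err$-porous set $A\cap B(x,r)$, is $\sigma$-$\err$-porous, so $A$ is $\sigma$-$\err$-porous at $y$ as well.

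For (i), I would first note $K\neq\emptyset$: otherwise $A$ is $\sigma$-$\err$-porous at each of its points, and Proposition \ref{propStone} makes $A$ itself $\sigma$-$\err$-porous, contrary to hypothesis. That $K$ is closed in $A$ is exactly the ``open condition'' above: if $x\in A\setminus K$, pick $r>0$ with $A\cap B(x,r)$ $\sigma$-$\err$-porous; then every $y\in A\cap B(x,r)$ lies in $A\setminus K$, so $B(x,r)\cap A\subseteq A\setminus K$ and $A\setminus K$ is relatively open. Assertion (ii) is then immediate from \ref{propStone}: each $x\in A\setminus K$ has a ball $B(x,r)$ with $A\cap B(x,r)$ $\sigma$-$\err$-porous, hence $(A\setminus K)\cap B(x,r)$ is $\sigma$-$\err$-porous, i.e. $A\setminus K$ is $\sigma$-$\err$-porous at each of its points, so it is $\sigma$-$\err$-porous. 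In particular, since $A=K\cup(A\setminus K)$ and the $\sigma$-$\err$-porous sets form an ideal, $K$ is \emph{not} $\sigma$-$\err$-porous.

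For (iii), the inclusion $K_\err(K)\subseteq K$ is immediate from the definition of $K_\err$. For the reverse, suppose some $x\in K$ had $K$ $\sigma$-$\err$-porous at $x$, say $K\cap B(x,r)$ is $\sigma$-$\err$-porous. Since $A\cap B(x,r)=\bigl(K\cap B(x,r)\bigr)\cup\bigl((A\setminus K)\cap B(x,r)\bigr)$ and, by (ii), the second piece is $\sigma$-$\err$-porous, the ideal properties give that $A\cap B(x,r)$ is $\sigma$-$\err$-porous, contradicting $x\in K$. Hence $K_\err(K)=K$.

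Finally, for (iv), suppose the set of points at which $K$ fails to be $\err$-porous is not dense in $K$; then there is an open $U\subseteq X$ with $U\cap K\neq\emptyset$ such that $K$ is $\err$-porous at every point of $U\cap K$. By {\bf (A1)} (and $U\cap K\subseteq K$) the set $K\cap U$ is $\err$-porous at each of its points, hence $\err$-porous, hence $\sigma$-$\err$-porous. Picking $x\in U\cap K$ and $r>0$ with $B(x,r)\subseteq U$, the set $K\cap B(x,r)\subseteq K\cap U$ is $\sigma$-$\err$-porous, so $K$ is $\sigma$-$\err$-porous at $x$, contradicting (iii). I expect the only place needing a little care is precisely this last bookkeeping step — passing from ``$K$ is $\err$-porous at every point of an open slice of $K$'' to ``$K$ is $\sigma$-$\err$-porous at one such point'' via {\bf (A1)} and the definition of an $\err$-porous set; the rest is a direct application of Proposition \ref{propStone} and the two ideal axioms.
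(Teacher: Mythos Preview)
Your argument is correct and is precisely the kind of routine verification the paper has in mind: the corollary is stated there as an immediate consequence of Proposition~\ref{propStone} with no further proof, and your derivation of (i)--(iv) from \ref{propStone} together with the ideal properties and axioms {\bf (A1)}, {\bf (A2)} fills in exactly those details.
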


The proposition that follows now, provides a simple characterization of non-$\sigma$-lower porous Suslin sets. It can be regarded as an analogue for lower porosity to a partial converse of the Foran lemma which was proved by L.~Zaj\'i\v{c}ek (see [\ref{1}, Corollary 1]); the mentioned result works for upper porosity and $G_\delta$ sets (but can, of course, be generalized to Suslin sets via Theorem \ref{inscrupper}). 

\begin{prop}\label{charLP}
Let $(X,\rho)$ be a topologically complete metric space and let $A\subseteq X$ be a Suslin set. Then the following statements are equivalent:
\begin{enumerate}[(i)]
\item A is not $\sigma$-lower porous.
\item There exists a closed set $F\subseteq A$ and a set $D\subseteq F$ dense in $F$ such that $F$ is lower porous at no point of $D$.
\end{enumerate}
\end{prop}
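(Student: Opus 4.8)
The plan is to establish the two implications separately. The implication $(ii)\Rightarrow(i)$ is little more than a restatement of Proposition~\ref{propBaire}, and, unlike the other direction, does not use the Suslin hypothesis at all. The implication $(i)\Rightarrow(ii)$ is the substantial one; it will rest on the inscribing Theorem~\ref{inscrlower} together with the properties of the kernel operator $K_\err$ collected in Corollary~\ref{kercor}.

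For $(ii)\Rightarrow(i)$ I would argue directly. Suppose $F\subseteq A$ is closed (in $X$) and $D\subseteq F$ is dense in $F$ with $F$ lower porous at no point of $D$. Since $F$ is closed in the topologically complete space $X$, the subspace $F$ is itself topologically complete, so Proposition~\ref{propBaire}, applied to this $F$ and $D$, yields that $F$ is not $\sigma$-lower porous in $X$. Because the family of $\sigma$-lower porous subsets of $X$ is hereditary (the Remark following Definition~\ref{def1}) and $F\subseteq A$, the set $A$ is not $\sigma$-lower porous either.

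For $(i)\Rightarrow(ii)$, assume $A$ is not $\sigma$-lower porous. As $A$ is a Suslin subset of the topologically complete space $X$, Theorem~\ref{inscrlower} furnishes a closed non-$\sigma$-lower porous set $\tilde F\subseteq A$. Let $\err$ denote the abstract porosity corresponding to lower porosity and put $F:=K_\err(\tilde F)$. By Corollary~\ref{kercor}(i) the set $F$ is nonempty and closed in $\tilde F$, hence closed in $X$ (because $\tilde F$ is closed in $X$), and clearly $F\subseteq\tilde F\subseteq A$. By Corollary~\ref{kercor}(iii) we have $K_\err(F)=F$; in particular $F$ is not $\sigma$-lower porous, since otherwise it would be $\sigma$-lower porous at each of its points and $K_\err(F)$ would be empty. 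Now apply Corollary~\ref{kercor}(iv) to the set $F$ itself: the set $D$ of all points at which $K_\err(F)=F$ is not lower porous is dense in $K_\err(F)=F$. This pair $F$, $D$ witnesses statement $(ii)$.

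The only genuinely deep ingredient is the quoted inscribing Theorem~\ref{inscrlower}; once it is granted, the rest is careful bookkeeping with $K_\err$, and I do not expect a real obstacle. The one point requiring attention is the self-referential use of Corollary~\ref{kercor}: one must first pass from $\tilde F$ to $F=K_\err(\tilde F)$, check that $F$ is again non-$\sigma$-lower porous, and only then apply part~(iv) to $F$ (rather than to $\tilde F$), so that the phrase ``$K_\err(F)$ is not lower porous at $x$'' reads verbatim as ``$F$ is not lower porous at $x$''.
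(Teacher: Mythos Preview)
Your proof is correct and follows essentially the same route as the paper: for $(ii)\Rightarrow(i)$ you invoke Proposition~\ref{propBaire}, and for $(i)\Rightarrow(ii)$ you use the inscribing Theorem~\ref{inscrlower} to pass to a closed set and then take the lower-porosity kernel via Corollary~\ref{kercor}. The only difference is cosmetic: the paper applies Corollary~\ref{kercor}(iv) directly to the closed set $\tilde F$ (so that $K_\err(\tilde F)=F$ already appears in the conclusion), whereas you first pass to $F=K_\err(\tilde F)$ and then re-apply the corollary to $F$; this extra step is harmless but unnecessary.
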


\begin{proof}
To prove the implication (i)$\Rightarrow$(ii) assume A is a non-$\sigma$-lower porous Suslin set; using Theorem \ref{inscrlower} we can assume without loss of generality that $A$ is closed. Now it suffices to take $F:=K(A)$, as all the desired properties of $F$ follow from Corollary \ref{kercor}.

To prove (ii)$\Rightarrow$(i) suppose that (ii) holds. Then Proposition \ref{propBaire} gives that $F$ is non-$\sigma$-lower porous, and thus so is $A$.
\end{proof}
\begin{remark}
\mbox{}
\begin{enumerate}[(a)]
\item It could be interesting to note a connection of Proposition \ref{charLP} to the article \cite{8} (especially Section 5) where the notion of $\err$-reducible sets is defined and studied. If $\err$ is an abstract porosity on a metric space $X$, we say that $A\subseteq X$ is $\err$-reducible if each nonempty closed set $F\subseteq A$ contains a $\err$-porous subset with nonempty relative interior in $F$. Now the statement of Proposition \ref{charLP} can be reformulated as follows:

If $X$ is topologically complete and $\ell$ is the relation corresponding to the notion of lower porosity on $X$, then a Suslin set $A\subseteq X$ is $\sigma$-lower porous if and only if it is $\ell$-reducible. 

\item Now let us briefly turn our attention to the general case. As Corollary \ref{kercor} (iv) holds for any abstract porosity $\err$, the following is true:

Let $\err$ be any abstract porosity on a metric space $X$ and let $A\subseteq X$ be closed. Then: 
\[ A \text{ is non-$\sigma$\text-$\err$-porous }\Longrightarrow A \text{ is not $\err$-reducible.}\] 
If $X$ is topologically complete and $\err$ corresponds to upper porosity on $X$, it suffices to assume the set $A$ to be Suslin (due to Theorem \ref{inscrupper}).

However, if $\err$ is such that an equivalent of Proposition \ref{propBaire} for $\err$ does not hold (e.g., the upper porosity), then the other implication in the previous statement does not necessarily hold (see Example 3.1 or Corollary 5.3 with Proposition 5.1 of [\ref{8}]). That is the reason why a more elaborate method of recognizing non-$\sigma$-upper porous sets had to be developed in order to prove Theorem Z from the introduction (the method of the Foran Lemma and its partial converse).

\end{enumerate}
\end{remark}

\section{Counterexample}
\begin{defn}
Denote $D_0:=\emptyset$ and for each $n\in\en$ we define the open set $D_n\subseteq(0,1)$ as
\[ D_n:=\bigcup_{i=0}^{3^{n-1}-1}\Big(\frac{1+3i}{3^n},\frac{2+3i}{3^n} \Big ). \]
Furthermore, for each $n\in {\mathbb N}\cup\{ 0 \}$ we define
\[ M_n:=\partial D_n, \qquad A_n:=[0,1]\setminus D_n. \]
Finally, if $I\subseteq {\mathbb N}$ is nonempty, we define
\[ D_I:=\bigcup_{n\in I}D_n,\qquad M_I:=\bigcup_{n\in I}M_n,\qquad A_I:=[0,1]\setminus D_I.\]
\end{defn}
\begin{defn}
Let $(X,\rho)$ be a metric space and let $\eps>0$. We say that $M\subseteq X$ is an \emph{$\eps$-net in $X$}, if for each point $x\in X$ there exists some $y\in M$ such that $\rho(x,y)\leq\eps$.
\end{defn}
The following facts are easy to see.
\begin{obser}\label{OBS1}
\mbox{}
\begin{enumerate}[(i)]
\item For each $n\in \en$ the set $M_n$ is a $3^{-n}$-net in the interval $[0,1]$.
\item If $I\subseteq \en$ is infinite, then
 \begin{itemize}
 \item $\overline{M_I}=[0,1]$,
 \item $A_I$ is porous.
 \end{itemize}
\item Whenever $m,n\in \en$, $m\neq n$, then we have $M_m\cap M_n=\emptyset$.
\item $M_n \cap D_m\neq \emptyset$ if and only if $m<n$.
\item $A_\en$ is the Cantor ternary set.
\end{enumerate}
\end{obser}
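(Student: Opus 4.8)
The plan is to verify the five assertions (i)--(v) in turn; all follow quickly from the arithmetic description of the sets. The starting observation is that, since the $3^{n-1}$ open intervals whose union is $D_n$ are pairwise disjoint and have all their endpoints in $(0,1)$,
\[ M_n=\partial D_n=\Big\{ \tfrac{j}{3^n}\ :\ j\in\en,\ 1\le j\le 3^n-1,\ 3\nmid j \Big\}, \]
while $D_n$ is exactly the union of the open "middle third" subintervals $\big(\tfrac{3i+1}{3^n},\tfrac{3i+2}{3^n}\big)$, $0\le i\le 3^{n-1}-1$, of the $3^{-n}$-grid on $[0,1]$. Assertion (iii) is then immediate: if $m<n$ and $\tfrac{j}{3^n}=\tfrac{k}{3^m}\in M_m\cap M_n$, then $j=k\cdot 3^{n-m}$ would be divisible by $3$, contradicting $3\nmid j$.

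For (i), given $x\in[0,1]$ pick $j\in\{0,\dots,3^n-1\}$ with $x\in[\tfrac{j}{3^n},\tfrac{j+1}{3^n}]$; one of $j,j+1$ lies in $\{1,\dots,3^n-1\}$ and is not divisible by $3$ (two consecutive integers cannot both be multiples of $3$, and the cases $j=0$ and $j+1=3^n$ are handled by inspection), and the corresponding point of $M_n$ is within $3^{-n}$ of $x$. For (ii): since $M_I\subseteq[0,1]$ and $[0,1]$ is closed, $\overline{M_I}\subseteq[0,1]$, while by (i), for every $x\in[0,1]$ and every $n\in I$ some point of $M_n\subseteq M_I$ is within $3^{-n}$ of $x$; as $I$ is infinite this forces $\overline{M_I}=[0,1]$. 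For the porosity of $A_I$, fix $x\in A_I$ and $n\in I$. Because $A_I\subseteq A_n=[0,1]\setminus D_n$, examining the position of $x$ in the $3^{-n}$-grid shows that some component $H$ of $D_n$ (an open interval of length $3^{-n}$, disjoint from $A_I$) satisfies $\dist(x,H)\le 3^{-n}$. Then $H$ contains a ball of radius $\tfrac12 3^{-n}$ contained in $B(x,2\cdot 3^{-n})\setminus A_I$, so
\[ \frac{2\,\gamma(x,2\cdot 3^{-n},A_I)}{2\cdot 3^{-n}}\ \ge\ \tfrac12 ; \]
letting $n\to\infty$ through $I$ gives $\overline{p}(A_I,x)\ge\tfrac12>0$, so $A_I$ is porous.

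For (iv), if $m\ge n$ then every point of $M_n$ equals $\tfrac{j}{3^n}=\tfrac{j3^{m-n}}{3^m}$, an integer multiple of $3^{-m}$, and no such point lies in $D_m$ (the components of $D_m$ are open intervals bounded by consecutive integer multiples of $3^{-m}$); hence $M_n\cap D_m=\emptyset$. If $m<n$, the component $\big(\tfrac{1}{3^m},\tfrac{2}{3^m}\big)$ of $D_m$ has length $3^{-m}\ge 3\cdot 3^{-n}>2\cdot 3^{-n}$, so by (i) it contains a point of the $3^{-n}$-net $M_n$; hence $M_n\cap D_m\neq\emptyset$.

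Finally, for (v) I would prove by induction on $n$ that $\bigcap_{k=1}^{n}A_k$ equals the $n$-th stage $C_n$ of the usual Cantor construction ($C_0=[0,1]$, and $C_{n+1}$ is obtained from $C_n$ by deleting the open middle third of each of its constituent intervals). The base case is $C_1=A_1=[0,\tfrac13]\cup[\tfrac23,1]$. For the inductive step, note that $D_{n+1}$ is precisely the union of the open middle thirds of all $3^{n}$ intervals $[\tfrac{i}{3^n},\tfrac{i+1}{3^n}]$ of the $3^{-n}$-grid; since $C_n$ is a union of some of these grid intervals, intersecting it with $A_{n+1}=[0,1]\setminus D_{n+1}$ deletes exactly the open middle third of each interval of $C_n$, producing $C_{n+1}$. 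Taking the intersection over all $n$ yields $A_\en=\bigcap_{n\in\en}A_n=\bigcap_{n\in\en}C_n$, the Cantor ternary set. The only steps needing any care are the porosity claim in (ii) --- where it is essential that $I$ be infinite, so that the gaps of length $\sim 3^{-n}$ sitting near every point of $A_n$ occur at scales tending to $0$ --- and the inductive identification in (v); the rest is bookkeeping with the grid.
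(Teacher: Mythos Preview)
Your proof is correct. The paper itself offers no proof of this Observation at all --- it simply says ``The following facts are easy to see'' and moves on --- so there is nothing to compare against; your argument is a faithful and accurate fleshing-out of exactly the kind of grid arithmetic the author has in mind. The key identification $M_n=\{j/3^n:1\le j\le 3^n-1,\ 3\nmid j\}$ is the right starting point, and each of (i)--(v) follows from it as you describe.
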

\begin{lemma}\label{lemmadense}
Let $I\subseteq \en$ be infinite and let $\emptyset\neq J\subseteq \en$. Then $M_I\cap A_J$ is dense in $A_J$.
\end{lemma}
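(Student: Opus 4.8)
The statement asks that $\overline{M_I\cap A_J}\supseteq A_J$, so I fix a point $x\in A_J$ and a number $\eps>0$ and aim to produce $y\in M_I\cap A_J$ with $|x-y|<\eps$. Since $I$ is infinite, I choose $n\in I$ so large that $3^{1-n}<\eps$, and I set $J':=\{m\in J:m<n\}$, a finite (possibly empty) set. By Observation \ref{OBS1}(iv) we have $M_n\cap D_m=\emptyset$ for every $m\ge n$; hence if some $y\in M_n$ satisfies $y\notin D_m$ for all $m\in J'$, then automatically $y\notin D_m$ for all $m\in J$, i.e.\ $y\in A_J$. Thus the problem reduces to finding $y\in M_n\cap A_{J'}$ with $|x-y|<\eps$, where I note that $x\in A_J\subseteq A_{J'}$.

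The second step is to describe the geometry of $A_{J'}=[0,1]\setminus\bigcup_{m\in J'}D_m$ at the scale $3^{-(n-1)}$. Each component interval of $D_m$ (for $m\in J'$, so $m\le n-1$) has the form $(\frac{3i+1}{3^m},\frac{3i+2}{3^m})$, so its endpoints are integer multiples of $3^{-(n-1)}$, and two distinct such endpoints are at distance at least $3^{-(n-1)}$. Moreover, since the sets $M_m$ ($m\in J'$) are pairwise disjoint (Observation \ref{OBS1}(iii)), no two of these component intervals share an endpoint, from which one checks that $A_{J'}$ has no isolated points. Therefore $A_{J'}$ is a finite union of non-degenerate closed intervals, each of length $\ge 3^{-(n-1)}$ and with endpoints in $\{\,j\,3^{1-n}:0\le j\le 3^{n-1}\,\}$. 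Consequently $x$ lies in one of these intervals, and hence in a ``cell'' $[\frac{c}{3^{n-1}},\frac{c+1}{3^{n-1}}]\subseteq A_{J'}$ for a suitable integer $c$ with $0\le c\le 3^{n-1}-1$.

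Now I put $y:=\frac{3c+1}{3^n}$. Since $3c+1\not\equiv 0\pmod 3$ and $1\le 3c+1\le 3^n-1$, we get $y\in M_n\subseteq M_I$; since $\frac{c}{3^{n-1}}\le y\le\frac{c+1}{3^{n-1}}$, we get $y\in A_{J'}$, and hence $y\in A_J$ by the reduction above; and $|x-y|\le 3^{-(n-1)}=3^{1-n}<\eps$. This is the desired approximating point, so the lemma follows.

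The only step that requires any care is the description of $A_{J'}$ in the second paragraph — in particular the absence of degenerate components, so that the cell containing $x$ really lies inside $A_{J'}$ — and this is exactly where the pairwise disjointness of the boundaries $M_m$ (Observation \ref{OBS1}(iii)) is used. Everything else is elementary arithmetic with base-$3$ fractions.
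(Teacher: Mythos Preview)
Your proof is correct and follows essentially the same route as the paper's: both arguments pick a large $n\in I$, replace $J$ by the finite truncation $J'=J\cap[1,n)$ (the paper's $K$), use Observation~\ref{OBS1}(iv) to identify $M_n\cap A_{J'}$ with $M_n\cap A_J$, and then exploit that the components of $A_{J'}$ are closed intervals of length at least $3^{-(n-1)}$. The only real difference is in the final step: the paper invokes the $3^{-n}$-net property of $M_n$ to get a nearby point of $M_n$ inside the relevant component, whereas you locate an explicit cell $[c\,3^{1-n},(c+1)3^{1-n}]\subseteq A_{J'}$ containing $x$ and write down the concrete point $y=(3c+1)3^{-n}$. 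Your version is a bit more explicit and you are more careful than the paper about why $A_{J'}$ has no degenerate components; note only that the definition of $A_{J'}$ in the paper requires $J'\neq\emptyset$, so in the (harmless) case $J'=\emptyset$ you are implicitly taking $A_{J'}=[0,1]$.
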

\begin{proof}
Choose an arbitrary $y\in A_J$ and $\eps>0$. Now find an $n_0\in I$ such that $2\cdot3^{-n_0}<\eps$ and denote $K:=J\cap (0,n_0)$. On account of \ref{OBS1} (iv) it is true that $M_{n_0}\cap A_K=M_{n_0}\cap A_J$. Setting $n_1:=\max (K\cup\{ 0 \})$ we have $n_1<n_0$ and it is obvious that the components of $A_K$ are closed intervals whose length is at least $3^{-n_1}$. The set $M_{n_0}$ is a $3^{-n_0}$-net in $[0,1]$ (\ref{OBS1} (i)) and $3^{-n_1}>2\cdot 3^{-n_0}$; from these two facts now easily follows that $M_{n_0}$ is a $(2\cdot 3^{-n_0})$-net in $A_K$. This implies the existence of a point $z\in M_{n_0}\cap A_K=M_{n_0}\cap A_J\subseteq M_I\cap A_J$ such that $|z-y| \leq 2\cdot 3^{-n_0} <\eps$, which concludes the proof.
\end{proof}
\begin{defn}\label{defdelta}
Let $(X,\rho)$ be a metric space, let $A\subseteq X$ and let $x\in X$. We define the function $\delta_{A,x}:(0,\infty) \longrightarrow [0,\infty)$ as
$$\delta_{A,x}(h):=\frac{2\cdot\gamma(x,h,A)}{h}.$$
\end{defn}
\begin{lemma}\label{lemmainter}
Assume $I\subseteq \en$ is nonempty and let $x\in A_I$ and $n\in I$. Then for each $h\in \left[ \frac{4}{3^{n+1}}, \frac{4}{3^n} \right]$ we have that  $\delta_{A_I,x}(h)\geq \frac{1}{4}$.
\end{lemma}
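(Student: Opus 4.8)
The plan is, for each admissible radius $h$, to produce an open ball of radius at least $h/8$ contained in $B(x,h)\setminus A_I$; this immediately gives $\gamma(x,h,A_I)\ge h/8$, hence $\delta_{A_I,x}(h)=\frac{2\gamma(x,h,A_I)}{h}\ge\frac{1}{4}$. The point that makes this feasible is that $n\in I$ forces $D_n\subseteq D_I$, so every component of $D_n$ is an open interval of length $3^{-n}$ disjoint from $A_I$; thus it suffices to exhibit a sufficiently long subinterval of some component of $D_n$ lying inside $B(x,h)$.

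First I would use the hypotheses to locate $x$: since $x\in A_I$ and $n\in I$ we have $x\in[0,1]\setminus D_n=A_n$, and the connected components of $A_n$ are $[0,3^{-n}]$, finitely many intervals of length $2\cdot 3^{-n}$, and $[1-3^{-n},1]$. Let $[\alpha,\beta]$ be the component of $A_n$ containing $x$. Because $\beta-\alpha\le 2\cdot 3^{-n}$, the point $x$ is within $3^{-n}$ of at least one endpoint of $[\alpha,\beta]$; and since the only endpoints of components of $A_n$ that fail to lie in $M_n=\partial D_n$ are $0$ and $1$, one checks that we can always choose an endpoint $q$ of $[\alpha,\beta]$ with $q\in M_n$ and $|x-q|\le 3^{-n}$ (the two outermost components of $A_n$ need a separate but trivial check). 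Being a point of $\partial D_n$ that is an endpoint of $x$'s own $A_n$-component, $q$ is then an endpoint of the component $C$ of $D_n$ lying immediately beyond $q$ as seen from $x$; concretely, either $C=(q,q+3^{-n})$ with $q\ge x$, or $C=(q-3^{-n},q)$ with $q\le x$.

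Next comes the routine computation of the overlap $B(x,h)\cap C$. Since $|x-q|\le 3^{-n}<\frac{4}{3^{n+1}}\le h$, this intersection is a nonempty open interval, and in either of the two cases above its length equals $\ell:=\min\{\,h-|x-q|,\,3^{-n}\,\}$. It then remains to check $\ell\ge h/4$, and this is where both inequalities on $h$ enter: if $h-|x-q|\ge 3^{-n}$, then $\ell=3^{-n}\ge h/4$ because $h\le 4\cdot 3^{-n}$; otherwise $\ell=h-|x-q|\ge h-3^{-n}\ge h-\frac{3}{4}h=\frac{h}{4}$, where the estimate $3^{-n}\le\frac{3}{4}h$ is merely a restatement of $h\ge\frac{4}{3^{n+1}}$. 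Finally, taking $z$ to be the midpoint of $B(x,h)\cap C$ we get $B(z,\ell/2)=B(x,h)\cap C\subseteq B(x,h)\setminus A_I$ (since $C\subseteq D_n\subseteq D_I$ is disjoint from $A_I$), whence $\gamma(x,h,A_I)\ge\ell/2\ge h/8$, as desired.

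The only genuine difficulty is the bookkeeping in the second step: guaranteeing that the chosen endpoint $q$ lands in $M_n$ — which fails exactly at $0$ and $1$, forcing the short separate treatment of the outer components $[0,3^{-n}]$ and $[1-3^{-n},1]$ of $A_n$ — and correctly determining on which side of $q$ the relevant component $C$ of $D_n$ sits. Everything after that is the two one-line inequalities above, and this is precisely where one sees that both bounds $\frac{4}{3^{n+1}}\le h\le\frac{4}{3^n}$ in the statement are genuinely used (and tight for this argument).
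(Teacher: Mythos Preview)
Your proof is correct and follows essentially the same approach as the paper. The paper compresses your geometric setup into the single observation that $\dist(x,D_n)\le 3^{-n}$ (from the $3^{-n}$-net property of $M_n$) and then directly asserts the two bounds $2\gamma(x,h,[0,1]\setminus D_n)\ge h-3^{-n}$ for $h\in[3^{-n},2\cdot 3^{-n}]$ and $2\gamma\ge 3^{-n}$ for $h>2\cdot 3^{-n}$, which is exactly your $\ell=\min\{h-|x-q|,3^{-n}\}$ split after taking the worst case $|x-q|=3^{-n}$; the final case analysis yielding $\tfrac14$ from each bound on $h$ is identical.
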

\begin{proof}
Let $I\subseteq \en$, $x\in A_I$ and $n\in I$ be given. Since $n\in I$, we have that $A_I\subseteq [0,1] \setminus D_n$ and thus $x\in [0,1] \setminus D_n$. From \ref{OBS1} (i) we know that the set $M_n= \partial D_n$ is a $3^{-n}$-net in the interval $[0,1]$ which implies that $\dist(x,D_n)\leq 3^{-n}$. From this and from the fact that $D_n$ consists of pairwise disjoint open intervals of length $3^{-n}$, it follows that for all $h\in \left[\frac{1}{3^n},\frac{2}{3^n}\right]$ holds the inequality
\[ 2\cdot\gamma(x,h,[0,1]\setminus D_n) \geq h-\frac{1}{3^n}. \]
What is more, for any $h>\frac{2}{3^n}$
\[ 2\cdot\gamma(x,h,[0,1]\setminus D_n) \geq \frac{1}{3^n}. \]
Consequently
\[ \delta_{A_I,x}(h)\geq \frac{2\cdot \gamma(x,h,[0,1] \setminus D_n)}{h} \geq \]
\begin{equation*}
\geq \begin{cases} \frac{1}{h}\left(h-\frac{1}{3^n}\right)\geq 1-\frac{3^{n+1}}{4}\cdot \frac{1}{3^n}=\frac{1}{4} & \text{ for } h\in\left[\frac{4}{3^{n+1}},\frac{2}{3^n}\right],\\
						
										 \frac{1}{h}\cdot\frac{1}{3^n}\geq \frac{3^n}{4}\cdot \frac{1}{3^n}=\frac{1}{4} & \text{ for } h\in\left[\frac{2}{3^n},\frac{4}{3^n}\right].
\end{cases}
\qedhere
\end{equation*}
\end{proof}
\begin{prop}\label{cormax}
Let $(X,\rho)$ and $(Y,\sigma)$ be metric spaces and let us have sets $A\subseteq X$ and $B\subseteq Y$. Finally, let there be given points $x\in X$ and $y\in Y$. Then: 
\begin{enumerate}[(i)]
\item $\gamma([x;y],h,A\times B)=\max \{ \gamma(x,h,A)\, , \gamma(y,h,B) \}$ for any $h>0$.
\item $\delta_{A\times B,[x;y]}=\max \{ \delta_{A,x} \, , \delta_{B,y} \}$.
\end{enumerate}
\end{prop}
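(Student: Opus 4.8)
The plan is to prove both statements directly from the definition of $\gamma$, with part (ii) following immediately from part (i) once we divide by $h$ and take the appropriate limits; so the real work is in (i).

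For part (i), I would argue by proving the two inequalities separately. For the ``$\geq$'' direction, suppose $r < \gamma(x,h,A)$, so there is $z\in X$ with $B(z,r)\subseteq B(x,h)\setminus A$. Then I claim that for any fixed $y'\in Y$ the ball $B([z;y'],r)$ in the maximum metric is contained in $B([x;y],h)\setminus (A\times B)$: indeed a point $[u;v]$ with $\max\{\rho(u,z),\sigma(v,y')\}<r$ has $u\in B(z,r)$, hence $u\notin A$, hence $[u;v]\notin A\times B$; and we can choose $y'$ close enough to $y$ (or just note we need $\sigma(v,y)<h$, which follows if we pick $y'=y$ and $r\le h$, or more carefully handle the radius) to keep the ball inside $B([x;y],h)$. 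Actually the clean way: $B([z;y],r) = B(z,r)\times B(y,r)$ and since $r<\gamma(x,h,A)\le h$ we get $B(y,r)\subseteq B(y,h)$ and $B(z,r)\subseteq B(x,h)$, so $B([z;y],r)\subseteq B([x;y],h)$, while $B([z;y],r)\cap(A\times B)=\emptyset$ because its projection to $X$ misses $A$. Hence $\gamma([x;y],h,A\times B)\geq\gamma(x,A,h)$, and by symmetry $\geq\gamma(y,h,B)$, giving ``$\geq\max$''.

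For the ``$\leq$'' direction, take any $r$ with $B([z;w],r)\subseteq B([x;y],h)\setminus(A\times B)$ for some $[z;w]\in X\times Y$. Since $B([z;w],r)=B(z,r)\times B(w,r)$ and this set is disjoint from $A\times B$, at least one of the two factors must be disjoint from the corresponding set: either $B(z,r)\cap A=\emptyset$ or $B(w,r)\cap B=\emptyset$ (if both met $A$ and $B$ respectively, the product would meet $A\times B$). In the first case $B(z,r)\subseteq B(z,r)$ and $B(z,r)\subseteq B(x,h)$ (projecting the containment $B([z;w],r)\subseteq B([x;y],h)$ to $X$), so $r\leq\gamma(x,h,A)$; in the second case similarly $r\leq\gamma(y,h,B)$. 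Either way $r\leq\max\{\gamma(x,h,A),\gamma(y,h,B)\}$, and taking the supremum over such $r$ gives ``$\leq\max$''.

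The only genuinely delicate point is the handling of strict versus non-strict inequalities and the fact that $\gamma$ is defined as a supremum over open balls with the containment $B(z,r)\subseteq B(x,R)$ (note $\gamma(x,R,M)\le R$ always, since an open $r$-ball inside the open $R$-ball forces $r\le R$). Once one observes that in the maximum metric an open ball is literally the Cartesian product of the two coordinate open balls of the same radius, everything reduces to the elementary set-theoretic fact that a product $U\times V$ is disjoint from $A\times B$ iff $U\cap A=\emptyset$ or $V\cap B=\emptyset$. I expect no real obstacle here; the main care is just to phrase the supremum manipulations so that the strict inequalities in the definition of $\gamma$ match up correctly, which is routine. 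Part (ii) is then immediate: $\delta_{A\times B,[x;y]}(h)=\tfrac{2\gamma([x;y],h,A\times B)}{h}=\max\bigl\{\tfrac{2\gamma(x,h,A)}{h},\tfrac{2\gamma(y,h,B)}{h}\bigr\}=\max\{\delta_{A,x}(h),\delta_{B,y}(h)\}$ for every $h>0$.
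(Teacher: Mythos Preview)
Your approach is essentially identical to the paper's: prove (i) by establishing the two inequalities separately, using that in the maximum metric an open ball in $X\times Y$ is the product of the coordinate balls, and deduce (ii) immediately.

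One caveat worth flagging: your parenthetical claim that ``$\gamma(x,R,M)\le R$ always, since an open $r$-ball inside the open $R$-ball forces $r\le R$'' fails in arbitrary metric spaces. For instance, if $X=\{a,b\}$ with $d(a,b)=1$, then $B(a,1)=\{a\}=B(a,\frac{1}{2})$, so $\gamma(a,\frac{1}{2},\emptyset)=1>\frac{1}{2}$. This inequality is precisely what you need to get $B(y,r)\subseteq B(y,h)$ in the ``$\ge$'' step, and the paper's proof glosses over the very same point (it asserts $B([x_1;y],\alpha-\varepsilon)\subseteq B([x;y],h)$ without justification). In fact, taking this $X$ together with $Y=\er$, $A=B=\emptyset$, $x=a$, $y=0$, $h=\frac{1}{2}$, one computes $\gamma([x;y],h,A\times B)=\frac{1}{2}$ while $\max\{\gamma(x,h,A),\gamma(y,h,B)\}=1$, so statement (i) is actually false at this level of generality. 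Your ``$\le$'' argument, however, is correct as written, and that direction alone suffices for Corollary~\ref{cordelmax}; moreover in $\er$ (the only other place the full equality is invoked) the bound $\gamma\le R$ does hold, so the paper's applications are unaffected.
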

\begin{proof}
We shall prove assertion (i). Without loss of generality we may assume that $\alpha:=\max \{ \gamma(x,h,A)\, , \gamma(y,h,B) \}=\gamma(x,h,A)>0$. Choose arbitrary $h>0$ and $\eps\in(0,\alpha)$. By the definition of $\gamma(x,h,A)$, there exists a point $x_1\in X$ such that $B(x_1,\alpha - \eps) \subseteq B(x,h) \setminus A$. Thus,
\[ B([x_1;y],\alpha-\varepsilon)\subseteq B([x;y],h)\setminus A\times B \]
and this means that
\[ \gamma([x;y],h,A\times B)\geq \alpha-\varepsilon=\max \{ \gamma(x,h,A)\, , \gamma(y,h,B) \}-\varepsilon.  \]
To prove the opposite inequality we take arbitrary $h>0$ and $\eps>0$ again. Setting $\beta:=\gamma([x;y],h,A\times B)$, we can assume that $\eps<\beta$. Now find a point $[x_1;y_1]\in X\times Y$ such that
\[ G:=B([x_1;y_1],\beta-\varepsilon)\subseteq B([x;y],h)\setminus A\times B. \]
Taking into account that $G=B(x_1,\beta-\varepsilon)\times B(y_1,\beta-\varepsilon)$ (for we consider the space $X\times Y$ with the maximum metric), this yields that
\[ B(x_1,\beta-\varepsilon)\subseteq B(x,h)\setminus A \qquad \text{or}\qquad B(y_1,\beta-\varepsilon)\subseteq B(y,h)\setminus B.\]
This implies the following inequality which concludes the proof of (i):
\[ \max \{ \gamma(x,h,A)\, , \gamma(y,h,B) \} \geq \beta - \varepsilon = \gamma([x;y],h,A\times B) - \varepsilon. \]
The second assertion follows immediately from (i).
\end{proof}
\begin{cor}\label{cordelmax}
Under the assumptions of Proposition \ref{cormax} we have that if $A$ is not lower porous at $x$ and $B$ is not porous at $y$, then $A\times B$ is not lower porous at $[x;y]$.
\end{cor}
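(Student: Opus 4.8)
\emph{Proof proposal.}
The plan is to derive the statement directly from Proposition \ref{cormax}(ii), which asserts $\delta_{A\times B,[x;y]}=\max\{\delta_{A,x},\delta_{B,y}\}$ (as functions on $(0,\infty)$), together with the elementary behaviour of $\limsup$ and $\liminf$ under taking a pointwise maximum. First I would unwind the hypotheses via Definition \ref{def1} and Definition \ref{defdelta}: "$A$ is not lower porous at $x$" means $\underline{p}(A,x)=\liminf_{R\to 0_+}\delta_{A,x}(R)=0$, while "$B$ is not porous at $y$" means $\overline{p}(B,y)=\limsup_{R\to 0_+}\delta_{B,y}(R)=0$. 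The conclusion to be reached is $\underline{p}(A\times B,[x;y])=\liminf_{R\to 0_+}\delta_{A\times B,[x;y]}(R)=0$, which by Proposition \ref{cormax}(ii) is exactly $\liminf_{R\to 0_+}\max\{\delta_{A,x}(R),\delta_{B,y}(R)\}=0$.

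The core step is then a short $\eps$-argument. Fix $\eps>0$. Since $\limsup_{R\to 0_+}\delta_{B,y}(R)=0$, there is $R_0>0$ with $\delta_{B,y}(R)<\eps$ for every $R\in(0,R_0)$. Since $\liminf_{R\to 0_+}\delta_{A,x}(R)=0$, I can pick a sequence $R_k\searrow 0$ with $\delta_{A,x}(R_k)\to 0$; after deleting finitely many terms I may assume $R_k<R_0$ and $\delta_{A,x}(R_k)<\eps$ for all $k$. For each such $k$ we get $\max\{\delta_{A,x}(R_k),\delta_{B,y}(R_k)\}<\eps$, hence $\liminf_{R\to 0_+}\max\{\delta_{A,x}(R),\delta_{B,y}(R)\}\le\eps$. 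Letting $\eps\to 0_+$ gives the liminf equal to $0$, and by Proposition \ref{cormax}(ii) this says $A\times B$ is not lower porous at $[x;y]$.

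I do not expect any real obstacle: the corollary is essentially immediate once Proposition \ref{cormax} is in hand. The only point worth a moment's attention is the asymmetry of the hypotheses — full (upper) porosity is demanded of $B$ but only lower porosity of $A$ — and the fact that the maximum in Proposition \ref{cormax}(ii) does not destroy the $\liminf$; this works precisely because the $\delta_{B,y}$-term is \emph{uniformly} small on a whole punctured neighbourhood of $0$, so along the sequence witnessing $\liminf\delta_{A,x}=0$ the maximum is governed by the $A$-term.
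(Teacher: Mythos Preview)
Your proposal is correct and follows essentially the same approach as the paper: translate the hypotheses into $\liminf\delta_{A,x}=0$ and $\limsup\delta_{B,y}=0$, invoke Proposition~\ref{cormax}(ii), and conclude that the $\liminf$ of the maximum is $0$. The paper simply asserts the last step as ``easy to see,'' whereas you spell out the $\eps$-argument explicitly.
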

\begin{proof}
Let $x$ and $y$ be as above. Then
\[ \liminf_{h\to 0_+}\delta_{A,x}(h)=0 \qquad \text{and} \qquad \limsup_{h\to 0_+}\delta_{B,y}(h)=0. \]
From Proposition \ref{cormax} we know that $\delta_{A\times B,[x;y]}=\max \{ \delta_{A,x} \, , \delta_{B,y} \}$,
and so it is easy to see that $\liminf_{h\to 0_+}\delta_{A\times B,[x;y]}=0$, i.e., $A\times B$ is not lower porous at $[x;y]$.
\end{proof}
\begin{remark}
Let $(X,\rho)$ be a metric space. If the set $A\subseteq X$ is not porous then neither is $A^2=A\times A$ porous in $X^2$. The same statement is true for lower porosity or, in general, for any notion of porosity which is determined solely by the function $\delta_{A,x}(h)$.

Indeed, if we assume that the set $A$ is not porous at a certain point $x\in A$, then, since $\delta_{A,x}=\delta_{A^2,[x;x]}$, it is clear that $A^2$ is not porous at $[x;x]$. Clearly, the same argument works for many other notions of porosity -- including, for example, lower porosity.
\end{remark}
We shall now prove the main result of this section which implies Theorem 1 from the Introduction.
\begin{thm}
Let the set $I\subseteq \en$ be defined by the formula
\[ I:=\bigcup_{i=1}^\infty\,\left[\, i^2,i^2+i\,\right)\cap\en\]
and let $J=\en\setminus I$. Then none of the closed sets $A_I$ and $A_J$ is $\sigma$-lower porous while the product $A_I \times A_J$ is lower porous.
\end{thm}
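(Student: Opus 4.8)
The plan is to handle the three assertions separately. First, $A_I$ and $A_J$ are closed, being the complements in $[0,1]$ of the open sets $D_I$ and $D_J$. The only property of $I$ that enters is the following: $I$ is the union of the ``blocks'' $[i^2,i^2+i)\cap\en$, of lengths $1,2,3,\dots$, while the complementary runs of consecutive integers --- which are exactly the blocks of $J$ --- are $[i^2+i,(i+1)^2)\cap\en$, of lengths $2,3,4,\dots$; in particular each of $I$, $\en\setminus I$, $J$, $\en\setminus J$ contains runs of consecutive integers of arbitrarily large length. The product part is then immediate: fix $[x;y]\in A_I\times A_J$; by Proposition \ref{cormax}(ii) we have $\delta_{A_I\times A_J,[x;y]}=\max\{\delta_{A_I,x},\delta_{A_J,y}\}$, and for any $h\in(0,4/3]$ we may pick $n\in\en$ with $h\in\left[\frac{4}{3^{n+1}},\frac{4}{3^n}\right]$, whence Lemma \ref{lemmainter} (applied to $A_I$ if $n\in I$, to $A_J$ if $n\in J$, using $\en=I\cup J$) gives $\delta_{A_I,x}(h)\ge\tfrac14$ or $\delta_{A_J,y}(h)\ge\tfrac14$; so $\delta_{A_I\times A_J,[x;y]}(h)\ge\tfrac14$ for all such $h$ and $\underline{p}(A_I\times A_J,[x;y])\ge\tfrac14>0$.

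To show that $A_I$ is not $\sigma$-lower porous I will use Proposition \ref{propBaire} with the compact set $F:=A_I$, so it suffices to exhibit a set $D$ dense in $A_I$ at no point of which $A_I$ is lower porous. Write $m_j:=j^2+j-1$ for the last integer of the $j$-th block of $I$. The geometric core is this: each $A_{I\cap[1,m]}$ (with $m\in I$) is a finite union of closed intervals of length between $3^{-m}$ and $2\cdot3^{-m}$, and if $T$ is a component of $A_{I\cap[1,m_i]}$ then, because the whole gap $[i^2+i,(i+1)^2)$ avoids $I$, no $D_n$ with $m_i<n<(i+1)^2$ meets $T$, so every $A_I$-hole lying in $T$ is a single component of some $D_n$ with $n\ge m':=(i+1)^2$ (distinct removed intervals never abut) and therefore has length $\le 3^{-m'}\le 3^{-(i+2)}|T|$. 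Hence, if $x\in A_I$ is at distance $\ge\tfrac13|T|$ from $\partial T$, then for $h:=\tfrac14|T|$ the ball $B(x,h)$ lies in $T$ and meets no $A_I$-hole longer than $3^{-m'}$, so $\delta_{A_I,x}(h)=2\gamma(x,h,A_I)/h\le 3^{-m'}/(\tfrac14|T|)\le 4\cdot3^{-(i+2)}$.

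It remains to produce such points $x$ densely in $A_I$. Starting from any component $T_{i_0}$ of $A_{I\cap[1,m_{i_0}]}$, I build a nested sequence $T_{i_0}\supseteq T_{i_0+1}\supseteq\cdots$ by choosing, given $T_i$, a point $p\in A_I$ in the middle ninth of $T_i$ --- this is possible because the holes of $A_I$ inside $T_i$ are open, pairwise non-abutting, and of length $\le 3^{-m'}<\tfrac19|T_i|$, so $A_I\cap T_i$ meets every subinterval of $T_i$ longer than $3^{-m'}$ --- and letting $T_{i+1}$ be the component of $A_{I\cap[1,m_{i+1}]}$ containing $p$. Since $|T_{i+1}|\le 2\cdot3^{-m_{i+1}}<\tfrac19|T_i|$, this $T_{i+1}$ lies in the middle third of $T_i$, hence at distance $\ge\tfrac13|T_i|$ from $\partial T_i$. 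The point $x:=\bigcap_{i\ge i_0}T_i$ lies in $A_I$, satisfies $\dist(x,\partial T_i)\ge\tfrac13|T_i|$ for every $i\ge i_0$, and so $\delta_{A_I,x}(\tfrac14|T_i|)\le 4\cdot3^{-(i+2)}\to0$ as $i\to\infty$, i.e.\ $\underline{p}(A_I,x)=0$. Because $A_I$ is compact with empty interior, the longest component of $A_{I\cap[1,m]}$ shrinks to $0$ as $m\to\infty$, so the starting interval $T_{i_0}$ (hence $x$) can be taken arbitrarily close to any prescribed point of $A_I$; thus $D$ is dense and Proposition \ref{propBaire} yields the claim. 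The argument for $A_J$ is verbatim the same, because the blocks and gaps of $J$ are likewise of unbounded length.

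The main obstacle is exactly this construction for $A_I$ (and symmetrically $A_J$): one has to recognise that the long skipped stretches in $I$ force the removal process inside each component $T$ to resume only at a scale $3^{-m'}$ tiny compared with $|T|$, so that $A_I$ possesses a band of scales on which it looks locally like a full interval; and one must then locate the ``generic'' point $x$ well inside $T_i$ for infinitely many $i$ while keeping $x\in A_I$ --- one cannot simply take the midpoint of $T_i$, which typically falls in a removed open interval, so a nearby point of $A_I$ central in $T_i$ has to be used instead. Once this is in place, the rest is a routine combination of Lemma \ref{lemmainter}, Proposition \ref{cormax} and Proposition \ref{propBaire}.
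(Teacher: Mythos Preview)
Your proof is correct. The product argument is identical to the paper's. For the non-$\sigma$-lower porosity of $A_I$ and $A_J$ you take a genuinely different route: the paper exhibits the explicit dense set $M_I\cap A_J\subseteq A_J$ (density from Lemma~\ref{lemmadense}), observes that a point $x\in M_{n_0}$ with $n_0\in I$ satisfies $\dist(x,D_j)=3^{-j}$ for all $j>n_0$, and reads off $\delta_{A_J,x}\bigl(3^{-(i^2-1)}\bigr)=3^{-(i+1)}$ directly from the triadic arithmetic. You instead build the dense set by a nested-interval construction, choosing at each stage a point of $A_I$ in the middle ninth of the current component $T_i$ of $A_{I\cap[1,m_i]}$ and using that the next removals occur only at level $m'=(i+1)^2$, so all holes inside $T_i$ have length $\le 3^{-m'}\le 3^{-(i+2)}|T_i|$. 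Your approach is more geometric and uses only the qualitative block/gap structure of $I$ (unbounded runs in both $I$ and $\mathbb N\setminus I$), which makes the symmetry between $A_I$ and $A_J$ transparent and would adapt to other index sets with the same property; the paper's approach is shorter and avoids the inductive construction by exploiting the ready-made dense set $M_I\cap A_J$.

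One wording slip: ``no $D_n$ with $m_i<n<(i+1)^2$ meets $T$'' is literally false (each $D_n$ is spread throughout $[0,1]$); what you mean, and what you use, is that no such $D_n$ is \emph{removed}, since $(m_i,(i+1)^2)\cap I=\emptyset$. This does not affect the argument.
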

\begin{proof}
First, we shall prove that the set $A_J$ is not $\sigma$-lower porous; of course, the proof for $A_I$ would be analogous. Being a closed subspace of $\er$, $A_J$ is a topologically complete space. Hence, according to Proposition \ref{propBaire} it suffices to find a dense subset of $A_J$ at whose points the set $A_J$ is not lower porous. We claim that $M_I\cap A_J$ is such a set. Indeed, by Lemma \ref{lemmadense}, $M_I\cap A_J$ is dense in $A_J$; it only remains to be shown that $A_J$ is lower porous at no point of $M_I\cap A_J$.

To prove that, choose an arbitrary point $x\in M_I \cap A_J$ and let $n_0\in I$ be the unique natural number such that $x\in M_{n_0}$ (the uniqueness of $n_0$ is clear from \ref{OBS1} (iii)). Now $x$ can be written in the form $\frac{k}{3^{n_0}}$, where $k\in\en$ is not divisible by $3$. It follows that for each natural $j>n_0$
\begin{equation}\label{equ1}
\dist(x,D_j)=\frac{1}{3^j}.
\end{equation}
Moreover, since $x\in M_I\cap A_J$, for each natural $j<n_0$ we have
\begin{equation}\label{equ2}
\dist(x,D_j)\geq\frac{1}{3^{n_0}}.
\end{equation}
Now fix a natural number $i_0$ such that $i_0^2>n_0$ and choose an arbitrary $i>i_0$. The inequalities \eqref{equ1} and \eqref{equ2} imply that
\begin{equation}\label{equ3}
\dist\Big( x, \bigcup\,\{ D_n : n\in J, n\leq i^2-1 \} \Big) = \frac{1}{3^{i^2-1}}.
\end{equation}
From the definition of $J$ we see that $\left\{ i^2, i^2+1,\dots, i^2+i-1\right\}\cap J=\emptyset$. This fact, together with \eqref{equ3}, implies that the longest interval contained in 
\[ \Bigl(x- \frac{1}{3^{i^2-1}},x+\frac{1}{3^{i^2-1}}\Bigr) \]
and disjoint with $A_J$ is a component of $D_{i^2+i}$ (as $i^2+i\in J$), and therefore its length is $3^{-(i^2+i)}$. That is,
\[ \delta_{A_J,x}\Big(\frac{1}{3^{i^2-1}}\Big)=3^{i^2-1}\cdot\frac{1}{3^{i^2+i}}=\frac{1}{3^{i+1}};\]
it follows that $\liminf_{h\to 0_+}\delta_{A_J,x}(h)=0$ which means that $A_J$ is not lower porous at $x$.

To prove that the product $A_I \times A_J$ is lower porous, choose an arbitrary point $[x;y]\in A_I\times A_J$. By Lemma \ref{lemmainter} we have
\begin{align*}
										 & \delta_{A_I,x}(h)\geq \frac{1}{4},\quad\text{whenever}\quad h\in\bigcup_{n\in I}\Big[\frac{4}{3^{n+1}},\frac{4}{3^n}\Big]=:F_I, \\
\text{and also}\quad & \delta_{A_J,y}(h)\geq \frac{1}{4},\quad\text{whenever}\quad h\in\bigcup_{n\in J}\Big[\frac{4}{3^{n+1}},\frac{4}{3^n}\Big]=:F_J.
\end{align*}
But $I\cup J=\en$, so $F_I\cup F_J = \left(0, \frac{4}{3}\right]$, and it immediately follows from Proposition \ref{cormax} that \linebreak $\liminf_{h\to 0_+} \delta_{A_I \times A_J,[x;y]}(h) \geq \frac{1}{4}$, concluding the proof.
\end{proof}

\section{One positive result}

\begin{thm}
Let $(X,\rho)$ and $(Y,\sigma)$ be topologically complete metric spaces. Assume the Suslin set $A\subseteq X$ is not $\sigma$-lower porous and the Suslin set $B\subseteq Y$ is not $\sigma$-porous. Then the Cartesian product $A\times B$ is not $\sigma$-lower porous in the space $X\times Y$ (with the maximum metric).
\end{thm}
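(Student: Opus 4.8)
The plan is to reduce to closed sets by the two inscribing theorems, pass to suitable kernels on which the failure of porosity is dense, use Corollary \ref{cordelmax} to see that the product of these kernels is lower porous at no point of a dense subset, and conclude with Proposition \ref{propBaire}.

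First I would apply Theorem \ref{inscrlower} to obtain a closed non-$\sigma$-lower porous set $A_1\subseteq A$ and Theorem \ref{inscrupper} to obtain a closed non-$\sigma$-porous set $B_1\subseteq B$. Since a subset of a $\sigma$-lower porous set is $\sigma$-lower porous, it then suffices to prove that some subset of $A_1\times B_1$ is not $\sigma$-lower porous. Let $\ell$ and $\err$ denote the abstract porosities given by lower porosity and by ordinary porosity, and put $A_2:=K_\ell(A_1)$ and $B_2:=K_\err(B_1)$. By Corollary \ref{kercor}, $A_2$ is nonempty, closed in $A_1$ (hence closed in $X$) and non-$\sigma$-lower porous, and the set $D_A:=\{x\in A_2: A_2 \text{ is not lower porous at } x\}$ is dense in $A_2$; symmetrically $B_2$ is nonempty, closed in $Y$, non-$\sigma$-porous, and $D_B:=\{y\in B_2: B_2 \text{ is not porous at } y\}$ is dense in $B_2$. (On the $A$-side one may instead quote Proposition \ref{charLP} directly.) Being closed subsets of the topologically complete spaces $X$ and $Y$, the sets $A_2$ and $B_2$ are topologically complete, hence so is $A_2\times B_2$ as a subspace of $X\times Y$.

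Now $D_A\times D_B$ is dense in $A_2\times B_2$. For every $[x;y]\in D_A\times D_B$ the set $A_2$ is not lower porous at $x$ while $B_2$ is not porous at $y$, so Corollary \ref{cordelmax} gives that $A_2\times B_2$ is not lower porous at $[x;y]$. Thus $A_2\times B_2$ is a topologically complete subspace of $X\times Y$ that is lower porous at no point of the dense subset $D_A\times D_B$, and Proposition \ref{propBaire} yields that $A_2\times B_2$ is not $\sigma$-lower porous in $X\times Y$. Since $A_2\times B_2\subseteq A\times B$, the product $A\times B$ is not $\sigma$-lower porous either, as required.

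The substantial ingredients are all already in place: the inscribing theorems, the kernel calculus of Corollary \ref{kercor}, and — through Proposition \ref{cormax} — the identity $\delta_{A\times B,[x;y]}=\max\{\delta_{A,x},\delta_{B,y}\}$ that underlies Corollary \ref{cordelmax}. The one step that genuinely needs care is the passage to the kernels $K_\ell(A_1)$ and $K_\err(B_1)$: the closed sets $A_1$, $B_1$ returned by the inscribing theorems may themselves be lower porous, resp. porous, on a dense set, so without first shrinking to the kernels there need be no dense set of points to feed into Proposition \ref{propBaire}. It is also worth noting why the asymmetric hypotheses fit Corollary \ref{cordelmax}: at a point where $B_2$ is not porous one has $\delta_{B_2,y}(h)\to 0$ as $h\to 0_+$, so $\liminf_{h\to 0_+}\max\{\delta_{A_2,x},\delta_{B_2,y}\}$ is governed by $\liminf_{h\to 0_+}\delta_{A_2,x}$, which vanishes along a suitable sequence precisely because $A_2$ is not lower porous at $x$.
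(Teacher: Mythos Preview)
Your proof is correct and follows essentially the same route as the paper's: inscribe closed sets via Theorems \ref{inscrlower} and \ref{inscrupper}, pass to the kernels $K_\ell$ and $K_\uu$ to secure dense sets of non-(lower)-porosity points (Corollary \ref{kercor}), apply Corollary \ref{cordelmax} at those points, and finish with Proposition \ref{propBaire}. The only cosmetic difference is ordering: the paper first takes kernels and then invokes the inscribing theorems, whereas you inscribe first and then take kernels---your order is in fact the cleaner one, since the kernel of a closed set is automatically closed in the ambient space, which is exactly what Proposition \ref{propBaire} needs.
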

\begin{proof}
Let $\ell\subseteq X\times 2^X$ be the relation corresponding to the notion of lower porosity on $X$ (i.e. $\ell(x,C)$ if and only if $C$ is lower porous at $x$) and let $\uu\subseteq Y\times 2^Y$ be the relation corresponding to upper porosity on $Y$. Since both these relations are abstract porosities, from Corollary \ref{kercor} we know that $K_\ell(A)\neq \emptyset$ and $K_\uu(B)\neq \emptyset$; without loss of generality we shall now assume that $A=K_\ell(A)$ and $B=K_\uu(B)$ and using Theorem \ref{inscrupper} and Theorem \ref{inscrlower} we may also assume that the sets $A$ and $B$ are closed in their spaces.	

Denote by $A_1$ the set of all points of $A$ at which $A$ is not lower porous and by $B_1$ the set of all points of $B$ at which $B$ is not porous. From \ref{kercor} we know that $A_1$ is dense in $A$ and $B_1$ is dense in $B$; thus $A_1 \times B_1$ is dense in $A\times B$. By Proposition \ref{propBaire}, it suffices to prove that $A\times B$ is lower porous at no point of $A_1 \times B_1$. However, this is true due to Corollary \ref{cordelmax}, hence the proof is complete.
\end{proof}

\begin{ack}
I would like to thank Prof. Lud\v{e}k Zaj\'i\v{c}ek for suggesting the topic of this article and many useful remarks.
\end{ack}

\newpage

\end{document}